\documentclass[11pt]{amsart}
\setlength{\topmargin}{-2mm}
\setlength{\oddsidemargin}{12mm}
\setlength{\evensidemargin}{12mm}
\setlength{\textheight}{48\baselineskip}
\setlength{\textwidth}{140mm}
\usepackage{amsthm,amsmath,amssymb,enumerate}
\usepackage{graphicx,color}
\usepackage{hyperref}
\title{Smooth approximation of the modified conical K\"ahler-Ricci flow}
\author{Ryosuke Takahashi}

\address{Mathematical Institute, Tohoku University, 6-3, Aoba, Aramaki, Aoba-ku, Sendai, 980-8578, Japan}

\email{ryosuke.takahashi.a7@tohoku.ac.jp}

\keywords{conical K\"ahler-Einstein metric, conical K\"ahler-Ricci soliton, conical K{\"a}hler-Ricci flow}

\thanks{This work was supported by Grant-in-Aid for JSPS Fellows Number 16J01211.}
\subjclass[2010]{53C25}
\makeatletter

\@addtoreset{equation}{section}
\makeatother

{
\theoremstyle{definition}

\newtheorem*{acknowledgements}{Acknowledgements}
}

{
\theoremstyle{plain}
\newtheorem{theorem}{Theorem}[section]
\newtheorem{proposition}{Proposition}[section]

}

{
\theoremstyle{remark}
\newtheorem{remark}{Remark}[section]

}
\begin{document}
\begin{abstract}
We introduce the conical K\"ahler-Ricci flow modified by a holomorphic vector field. We construct a long-time solution of the modified conical K\"ahler-Ricci flow as the limit of a sequence of smooth K\"ahler-Ricci flows.
\end{abstract}
\maketitle
\section{Introduction}
Let $M$ be an $n$-dimensional Fano manifold with a K\"ahler metric $\omega_0 \in 2 \pi c_1(M)$. A K\"ahler metric $\omega \in 2 \pi c_1(M)$ is called {\it K\"ahler-Einstein} if it satisfies ${\rm Ric}(\omega)=\omega$. For a long while, it was conjectured that the existence of K\"ahler-Einstein metrics is equivalent to some algebro-geometric stability in the sense of Geometric Invariant Theory (Yau-Donaldson-Tian conjecture), which was recently solved by Chen-Donaldson-Sun \cite{CDS15} and Tian \cite{Tia15}. Their strategy was to study the existence problem of {\it smooth} K\"ahler-Einstein metrics on $M$ by deforming the cone angle, i.e., study the Gromov-Hausdorff limit of conical K\"ahler-Einstein metrics with cone angle $2 \pi \beta$ ($0 < \beta \leq 1$) along a smooth divisor $D \in |-K_M|$:
\[
{\rm Ric}(\omega)=\beta \omega+(1-\beta)[D]
\]
when $\beta$ goes to $1$, where $[D]$ is the current of integration along $D$. Although YDT conjecture has been completely settled, the existence problem of conical K\"ahler-Einstein metrics itself is also an interesting problem and studied extensively by many experts (cf. \cite{LS14}, \cite{SW16}).

Now we consider more general settings: we allow $D \in |-\lambda K_M|$ ($\lambda \in {\mathbb R}_+$) to be an ${\mathbb R}$-effective divisor with simple normal crossing support and write
\[
D=\sum_{i=1}^d \tau_i D_i
\]
where $\tau_i > 0$ and $D_i$ are smooth components. We say that a K\"ahler current $\omega \in 2 \pi c_1(M)$ is a   {\it conical K\"ahler metric} along $(1-\beta)D$ ($0 < \beta \leq 1$) if $\omega$ is smooth K\"ahler on $M \backslash D$, and asymptotically equivalent to the model conical K\"ahler metric near $D$: more precisely, near each point $p \in {\rm Supp}(D)$ where ${\rm Supp}(D)$ is cut out by the equation $\{z_1 \cdots z_r=0\}$ $(r \leq d)$ for some local holomorphic coordinates $(z^i)$, $\omega$ satisfies
\[
C^{-1} \omega_{\rm model} \leq \omega \leq C \omega_{\rm model}
\]
for some constant $C>0$, where
\[
\omega_{\rm model}:=\sqrt{-1}\sum_{i =1}^r |z^i|^{2(\beta-1)\tau_i} dz^i \wedge dz^{\bar{i}}+\sqrt{-1}\sum_{i=r+1}^n dz^i \wedge dz^{\bar{i}}
\]
is the model conical K\"ahler metric with cone angles $2 \pi (1-(1-\beta)\tau_i)$ along $\{z^i=0\}$. Let $X$ be a holomorphic vector field on $M$ whose imaginary part ${\rm Im}(X)$ generates a torus action on the line bundles ${\mathcal O}_M(D_i)$. Let $H_i$ be ${\rm Im}(X)$-invariant hermitian metrics on ${\mathcal O}_M(D_i)$ such that the curvature of the induced hermitian metric $H_D:=\otimes_{i=1}^d H_i^{\tau_i}$ is $\lambda \omega_0$. Let $s_i$ be the defining sections of ${\mathcal O}_M(D_i)$ associated to $D_i$, and set $s_D:=\otimes_{i=1}^d s_i^{\tau_i}$. We define a K\"ahler current $\omega^{\ast}$ as
\[
\omega^{\ast}:=\omega_0+k \sum_{i=1}^d \sqrt{-1}\partial \bar{\partial}|s_i|_{H_i}^{2(1-(1-\beta)\tau_i)}
\]
for sufficiently small constant $k>0$. Then $\omega^{\ast}$ is a conical K\"ahler metric along $(1-\beta)D$. According to \cite{DGSW13}, we say that a conical K\"ahler metric $\omega \in c_1(M)$ is a {\it conical K\"ahler-Ricci soliton} if it satisfies
\begin{equation} \label{cKS}
{\rm Ric}(\omega)=\gamma \omega+(1-\beta)[D]+L_X \omega
\end{equation}
in the sense of distributions on $M$, and
\[
{\rm Ric}(\omega)=\gamma \omega+L_X \omega
\]
in the classical sense on $M \backslash D$, where $\gamma=\gamma(\lambda, \beta):=1-\lambda (1-\beta) \geq 0$ and $L_X \omega$ is defined so that
\[
\int_M L_X \omega \wedge \zeta=-\int_M \omega \wedge L_X \zeta
\]
for any smooth $(n-1,n-1)$-form $\zeta$ on $M$. The notion of conical K\"ahler-Ricci solitons is a generalization of classical K\"ahler-Ricci solitons (cf. \cite{TZ00}, \cite{TZ02}) for the conical settings, and their examples in toric Fano manifolds are studied in \cite{DGSW13} and \cite{WZZ16}.

In this paper, we introduce the following {\it modified conical K\"ahler-Ricci flow} (MCKRF):
\begin{equation} \label{MCKRF}
\begin{cases}
\frac{\partial \omega}{\partial t}=-{\rm Ric}(\omega)+\gamma \omega+(1-\beta)[D]+L_X \omega\\
\omega |_{t=0}=\omega^{\ast}.
\end{cases}
\end{equation}
Then conical K\"ahler-Ricci solitons with respect to $X$ can be viewed as the stationary points of MCKRF. We say that $\omega=\omega(t)$ ($t \in [0,\infty)$) is a long-time solution of the above MCKRF if $\omega(t)$ is a conical K\"ahler metric along $(1-\beta)D$ for each $t$ which satisfies the equation \eqref{MCKRF} in the sense of distributions on $M \times [0,\infty)$ and can be simplified to the classical modified K\"ahler-Ricci flow
\[
\frac{\partial \omega}{\partial t}=-{\rm Ric}(\omega)+\gamma \omega+L_X \omega
\]
on $(M \backslash D) \times [0,\infty)$. If a long-time solution of the flow \eqref{MCKRF} converges to some K\"ahler current, it should be a conical K\"ahler-Ricci soliton with respect to $X$. Thus the flow \eqref{MCKRF} provides a new standard method for studying the equation \eqref{cKS}. In the case when $X\equiv0$, Chen-Wang \cite{CW15}\footnote{More precisely, they dealt with the ``strong'' conical K\"ahler-Ricci flow (with some H\"older continuity assumptions for potential functions).} established the short-time existence of the flow \eqref{MCKRF}. Then Liu-Zhang \cite{LZ17} and Wang \cite{Wan16} showed the long-time existence independently. On the other hand, in the general case, it seems that the flow \eqref{MCKRF} is considered only for $D=0$ (cf. \cite{TZ07}, \cite{PSSW11}).

Following the idea of \cite{LZ17} and \cite{Wan16}, we will construct a long-time solution of \eqref{MCKRF} as the limit of a  sequence of smooth K\"ahler-Ricci flows $\varphi_{\epsilon}$, where $\varphi_{\epsilon}$ ($\epsilon >0$) is a solution of the {\it modified twisted K\"ahler-Ricci flow} (MTKRF) defined in Section \ref{sec2}. Then we show the following:
\begin{theorem} \label{tck}
Assume that $|X(\log|s_D|_{H_D}^2)|<C$ on $M \backslash D$ for some constant $C>0$. Let $\omega_{\varphi_{\epsilon}}$ be a long-time solution of the modified twisted K\"ahler-Ricci flow \eqref{MTKRF}. Then, by passing to a subsequence $\{\epsilon_i\}$ satisfying $\epsilon_i \to 0$ as $i \to \infty$, the K\"ahler metric $\omega_{\varphi_{\epsilon_i}}$ converges to a solution of the modified conical K\"ahler-Ricci flow:
\[
\begin{cases}
\frac{\partial \omega_{\varphi}}{\partial t}=-{\rm Ric}(\omega_{\varphi})+\gamma \omega_{\varphi}+(1-\beta)[D]+L_X \omega_{\varphi}\\
\omega_{\varphi} |_{t=0}=\omega^{\ast}
\end{cases}
\]
as $i \to \infty$, where $\omega_{\varphi}:=\omega^{\ast}+\sqrt{-1} \partial \bar{\partial} \varphi$, and for any $t \in [0,\infty)$, the potential function $\varphi$ is H\"older continuous with respect to $\omega_0$. This covergence holds in the sense of distributions on $M \times [0,\infty)$, and in the $C_{\rm loc}^{\infty}$-topology on $(M \backslash D) \times [0,\infty)$ . In particular, there exists a long-time solution of the modified conical K\"ahler-Ricci flow.
\end{theorem}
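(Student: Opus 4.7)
The strategy, following Liu--Zhang \cite{LZ17} and Wang \cite{Wan16}, is to realize the conical flow as the limit of the smooth MTKRF, the new feature being that the drift by $X$ must be controlled uniformly in the smoothing parameter $\epsilon$. At the potential level MTKRF is a parabolic complex Monge--Amp\`ere equation of the schematic form
\[
\frac{\partial \varphi_{\epsilon}}{\partial t} = \log \frac{(\omega_0 + \sqrt{-1}\,\partial\bar{\partial}\varphi_{\epsilon})^n}{\Omega_{\epsilon}} + \gamma \varphi_{\epsilon} + X(\varphi_{\epsilon}) + h_{\epsilon},
\]
where $\Omega_{\epsilon}$ is a smooth volume form approximating the singular reference volume form whose Ricci curvature encodes the term $\gamma\omega_0 - \lambda(1-\beta)\sum_i\tau_i[D_i]$ in the limit, and $h_{\epsilon}$ is the corresponding Ricci potential. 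The entire argument reduces to extracting $\epsilon$-independent a priori estimates and passing to a subsequential limit.

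\emph{A priori estimates.} The first step is a uniform $L^{\infty}$-bound on $\varphi_{\epsilon}$ on each interval $[0,T]$, obtained by a parabolic maximum principle of the Ko\l odziej/Tsuji type used in \cite{LZ17,Wan16}. The drift $X(\varphi_{\epsilon})$ is genuinely new here: by decomposing $\varphi_{\epsilon}$ as a reference potential plus a correction, the hypothesis $|X(\log|s_D|_{H_D}^2)| < C$ guarantees that $X$ applied to the reference potential is bounded, so that the drift can be absorbed into the maximum principle. The second step is a uniform two-sided metric equivalence $C^{-1}\omega^{\ast} \leq \omega_{\varphi_{\epsilon}} \leq C\omega^{\ast}$ on compact subsets of $M \setminus D$, obtained via a Chern--Lu or Aubin--Yau type inequality with the model conical metric as the background; again the first-order term from the drift is controlled by the same hypothesis on $X(\log|s_D|_{H_D}^2)$. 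Standard parabolic Schauder and Evans--Krylov theory then upgrade these to uniform $C^{\infty}_{\mathrm{loc}}$-bounds on $(M\setminus D) \times [0,T]$.

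\emph{Passage to the limit and H\"older regularity.} Arzel\`a--Ascoli extracts a subsequence $\varphi_{\epsilon_i} \to \varphi$ converging in $C^{\infty}_{\mathrm{loc}}((M\setminus D) \times [0,\infty))$, and the limit solves the classical modified K\"ahler--Ricci flow off $D$. Across $D$, the Bedford--Taylor continuity of the complex Monge--Amp\`ere operator on uniformly bounded psh potentials yields distributional convergence on all of $M \times [0,\infty)$, and the current $(1-\beta)[D]$ is recovered from the singularity of the Ricci curvature of the limiting volume form. H\"older continuity of $\varphi(\cdot,t)$ with respect to $\omega_0$ follows from a Guedj--Ko\l odziej--Zeriahi-type H\"older stability estimate for complex Monge--Amp\`ere equations with $L^p$-densities, applied uniformly in $\epsilon$ and passed to the limit.

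\emph{Main obstacle.} The hard part is the uniform $L^{\infty}$-bound on $\varphi_{\epsilon}$ together with the uniform metric equivalence near $D$: without a control on the drift, $X(\varphi_{\epsilon})$ could blow up as $\epsilon \to 0$, destroying both the maximum-principle and the Laplacian estimate. The hypothesis $|X(\log|s_D|_{H_D}^2)| < C$ is precisely what keeps the parabolic maximum principle closable and is the central new ingredient compared with the $X \equiv 0$ treatment of \cite{LZ17,Wan16}.
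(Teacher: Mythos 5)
Your overall architecture (regularize, get $\epsilon$-independent estimates, extract a limit, identify $(1-\beta)[D]$ distributionally, get H\"older continuity from Kolodziej-type $L^p$ stability) matches the paper, but two central points of the estimate scheme are wrong or missing, and they are exactly where the work lies. First, you place the hypothesis $|X(\log|s_D|_{H_D}^2)|<C$ in the wrong step: the drift at the potential level needs no such assumption, since $\theta_X(\omega_{\varphi_\epsilon})=\theta_X+X(k\chi+\varphi_\epsilon)$ is uniformly bounded by the Tian--Zhu estimate (Proposition \ref{pfe}), which holds for every ${\rm Im}(X)$-invariant $\omega_0$-psh potential with a constant depending only on $\omega_0$ and $X$; this is what closes the maximum principle for the $C^0$ and $\dot\varphi_\epsilon$ bounds (Proposition \ref{ces}). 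The divisor hypothesis is needed only in the second-order estimate, to control $\Delta_{\omega_\epsilon}\theta_X(\omega_{\varphi_\epsilon})$, i.e.\ the divergence-type term $\sum_i X^i{}_{,i}$ computed in frames adapted to the degenerating reference metric $\omega_\epsilon$ (cf.\ \cite[Lemma A.2]{JLZ16}); your claim that without it ``$X(\varphi_\epsilon)$ could blow up'' is not the actual difficulty.

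Second, and more seriously, your Laplacian estimate is only asserted ``on compact subsets of $M\setminus D$'' via Chern--Lu against the model cone metric. That is insufficient: the theorem requires $\omega_\varphi(t)$ to be a conical K\"ahler metric along $(1-\beta)D$, which forces a global, $\epsilon$-independent two-sided bound $A^{-1}\omega_\epsilon\le\omega_{\varphi_\epsilon}\le A\omega_\epsilon$ on all of $M$ (Proposition \ref{ufl}); local bounds with constants depending on the compact set lose the conical structure of the limit. Moreover a maximum-principle argument cannot be closed on $M\setminus D$ or on a compact subset with boundary: the maximum point can migrate toward $D$, and the bisectional curvature of $\omega_\epsilon$ (or of the model cone) is unbounded below near $D$ as $\epsilon\to0$. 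The paper works on the closed manifold with the smooth data $\omega_\epsilon$, $\eta_\epsilon$ and uses the Guenancia--P\u{a}un auxiliary function $\Psi_{\epsilon,\rho}$ of \cite{GP16} to absorb those curvature terms, applying the maximum principle to $\log{\rm tr}_{\omega_\epsilon}\omega_{\varphi_\epsilon}+\Psi_{\epsilon,\rho}-B\varphi_\epsilon$; the extra first-order term created by the drift at the maximum point is then eliminated using the critical-point equation together with the uniform bounds on $X(\varphi_\epsilon)$ and $X(\Psi_{\epsilon,\rho})$, and this is where $\sup_M X^i{}_{,i}\le C$ (hence the hypothesis on $X(\log|s_D|_{H_D}^2)$) is consumed. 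None of this appears in your plan, so the key a priori estimate is not established; the remaining steps (interior higher-order estimates by Calabi/Evans--Krylov--Schauder, diagonal extraction, Poincar\'e--Lelong identification of $(1-\beta)[D]$, Kolodziej H\"older continuity) are fine in outline once that estimate is in place.
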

\begin{remark}
\begin{enumerate}
\item
The assumption $|X(\log|s_D|_{H_D}^2)|<C$ is a necessary condition for the existence of a conical K\"ahler-Ricci soliton with respect to $X$. In particular, this condition implies that $X$ is tangent to ${\rm Supp}(D)$ (cf. \cite[Remark 4.2]{JLZ16}). This assumption is used only for the uniform Laplacian estimate of MTKRF (cf. Proposition \ref{ufl}).

\item
We also note that when $D$ is smooth and $\lambda \geq 1$, such a vector field $X$ automatically becomes trivial (cf. \cite[Theorem 2.1]{SW16}). This is a reason why we allow $D$ to have simple normal crossing support.
\end{enumerate}
\end{remark}
An advantage of our approach is that we do not rely on the linear theory for conical Laplacians established by Donaldson  \cite{Don12} and Chen-Wang \cite{CW15}. At the same time, we should point out that Theorem \ref{tck} provides us not only the long-time existence of solutions, but also ``the regularization method'' to study the flow. The author expects that the conical K\"ahler-Ricci flow (and its regularization) method also works for the existence problem of conical K\"ahler-Ricci solitons. The arguments in this paper run closely in parallel to those of \cite{LZ17} except some changes due to the modification $X$. Nevertheless, we will try to make the arguments reasonably self-contained for readers' convenience.

The paper is organized as follows. We first review the regularization method and reduction to the Monge-Amp\`ere flow in Section \ref{sec2}. Then we consider the uniform Laplacian estimate for MTKRF in Section \ref{sec3}. Finally, we establish the $C_{\rm loc}^{\infty}$-estimate of MTKRF and give the proof of Theorem \ref{tck} in Section \ref{sec4}.

\begin{acknowledgements}
The author would like to express his gratitude to his advisor Professor Shigetoshi Bando for useful discussions on this article. This research is supported by Grant-in-Aid for JSPS Fellows Number 16J01211.
\end{acknowledgements}
\section{Regularization and reduction to the Monge-Amp\`ere flow} \label{sec2}
Let $\epsilon >0$ be a small constant. As in \cite[Section 3.1]{GP16}, We define the function
\begin{equation}
\chi_i(\epsilon^2+u):=\frac{1}{1-(1-\beta)\tau_i} \int_0^u \frac{(\epsilon^2+r)^{1-(1-\beta)\tau_i}-\epsilon^{2 (1-(1-\beta)\tau_i)}}{r}dr
\end{equation}
for $i=1,\ldots,d$ and $u \geq 0$. Then we see that the function $\chi_i(\epsilon^2+u)$ is smooth for each $\epsilon$, and there exists uniform constants (independent of $\epsilon$) $C>0$ and $\nu>0$ such that for all $i$, we have
\begin{equation} \label{ucc}
0 \leq \chi_i(\epsilon^2+u) <C
\end{equation}
provided that $u$ belongs to a bounded interval, and
\begin{equation} \label{eso}
\omega_{\epsilon} \geq \nu \omega_0.
\end{equation}
 We also have the convergence
\[
\chi_i(\epsilon^2+|s_i|_{H_i}^2) \xrightarrow{\epsilon \to 0} |s_i|_{H_i}^{2(1-(1-\beta)\tau_i)}
\]
in the $C_{\rm loc}^{\infty}$-topology on $M \backslash D_i$. Set $\chi:=\sum_{i=1}^d \chi_i(\epsilon^2+|s_i|_{H_i}^2)$ and $\omega_{\epsilon}:=\omega_0+\sqrt{-1} \partial \bar{\partial} k\chi$. Then we have
\[
\omega_{\epsilon} \xrightarrow{\epsilon \to 0} \omega^{\ast}
\]
in the sense of distributions on $M$, and in the $C_{\rm loc}^{\infty}$-topology on $M \backslash D$.
Meanwhile, since $[D]=\lambda \omega_0+\sum_{i=1}^d\sqrt{-1}\tau_i \partial \bar{\partial} \log |s_i|_{H_i}^2$ by the Poincar\`e-Lelong formula, we observe that
\[
\eta_{\epsilon}:=\lambda \omega_0+\sum_{i=1}^d\sqrt{-1}\tau_i \partial \bar{\partial} \log (|s_i|_{H_i}^2+\epsilon^2) \xrightarrow{\epsilon \to 0} [D],
\]
again, this convergence holds in the sense of distributions on $M$, and in the $C_{\rm loc}^{\infty}$-topology on $M \backslash D$. Now We define the {\it modified twisted K\"ahler-Ricci flow} (MTKRF) with the twisted form $\eta_{\epsilon}$:
\begin{equation} \label{MTKRF}
\begin{cases}
\frac{\partial \omega_{\varphi_{\epsilon}}}{\partial t}=-{\rm Ric}(\omega_{\varphi_{\epsilon}})+\gamma\omega_{\varphi_{\epsilon}}+(1-\beta)\eta_{\epsilon}+L_X \omega_{\varphi_{\epsilon}}\\
\omega_{\varphi_{\epsilon}}|_{t=0}=\omega_{\epsilon},
\end{cases}
\end{equation}
where $\omega_{\varphi_{\epsilon}}:=\omega_{\epsilon}+\sqrt{-1}\partial \bar{\partial}\varphi_{\epsilon}$. For an ${\rm Im}(X)$-invariant K\"ahler metric $\omega \in 2 \pi c_1(M)$, we also define an ${\mathbb R}$-valued function $\theta_X(\omega)$ by
\begin{equation}
\begin{cases}
i_X \omega=\sqrt{-1} \bar{\partial} \theta_X(\omega)\\
\int_M e^{\theta_X(\omega)} \omega^n=[\omega_0]^n.
\end{cases}
\end{equation}
In particular, we set $\theta_X:=\theta_X(\omega_0)$. Then, from \cite[Proposition 1.1]{TZ02} and \cite[Corollary 5.3]{Zhu00} (or \cite[Section 2.3]{BN14}), we have the following:
\begin{proposition} \label{pfe}
Let $\phi$ be a real-valued smooth function such that ${\rm Im}(X)(\phi)=0$ and $\omega_{\phi}:=\omega_0+\sqrt{-1}\partial \bar{\partial} \phi \geq 0$. Then we have
\begin{enumerate} 
\item $\theta_X(\omega_{\phi})=\theta_X+X(\phi)$.
\item $\sup_M |X(\phi)|<C$ for some constant $C$ which depends only on $\omega_0$ and $X$.
\end{enumerate}
\end{proposition}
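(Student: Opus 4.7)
For (1), the plan is a direct computation followed by a normalization argument. Writing $\omega_\phi = \omega_0 + \sqrt{-1}\partial\bar\partial\phi$ and using the holomorphy of $X$ (i.e., $\bar\partial X = 0$), a local-coordinate calculation gives $i_X(\sqrt{-1}\partial\bar\partial\phi) = \sqrt{-1}\bar\partial(X(\phi))$, which combined with $i_X\omega_0 = \sqrt{-1}\bar\partial\theta_X$ yields
\[
i_X\omega_\phi = \sqrt{-1}\bar\partial\bigl(\theta_X + X(\phi)\bigr).
\]
Hence $\theta_X(\omega_\phi) = \theta_X + X(\phi) + c$ for some constant $c$. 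To pin down $c = 0$, I would verify that $F(\phi) := \int_M e^{\theta_X + X(\phi)}\omega_\phi^n$ is invariant under ${\rm Im}(X)$-invariant deformations of $\phi$: differentiating along a smooth path $\phi_s$ with $\psi := \dot\phi_s$ and writing $f := \theta_X + X(\phi_s)$, the key pointwise identity
\[
n\sqrt{-1}\,\partial f\wedge\bar\partial\psi\wedge\omega_{\phi_s}^{n-1} = X(\psi)\,\omega_{\phi_s}^n
\]
(a direct consequence of $i_X\omega_{\phi_s} = \sqrt{-1}\bar\partial f$ together with the reality of $X(\psi)$ guaranteed by ${\rm Im}(X)(\psi) = 0$) and Stokes' theorem show $\frac{d}{ds}F(\phi_s) \equiv 0$. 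Thus $F(\phi) = F(0) = [\omega_0]^n$, matching the normalization of $\theta_X(\omega_\phi)$, so $c = 0$.

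For (2), by (1) we have $X(\phi) = \theta_X(\omega_\phi) - \theta_X$, and since $\theta_X$ is a fixed smooth function on the compact $M$, it suffices to bound $\sup_M|\theta|$ uniformly, where $\theta := \theta_X(\omega_\phi)$. By approximation one may assume $\omega_\phi > 0$. At a critical point $p$ of $\theta$, $\bar\partial\theta(p) = 0$ forces $i_X\omega_\phi(p) = 0$ and hence $X(p) = 0$, so the extrema of $\theta$ lie on the metric-independent zero set of $X$. Since $\omega_0$ is ${\rm Im}(X)$-invariant (as $H_D$ is, with curvature $\lambda\omega_0$) and $\phi$ is ${\rm Im}(X)$-invariant by hypothesis, $\omega_\phi$ is ${\rm Im}(X)$-invariant, and a short computation shows that $\theta/2$ is the moment map of ${\rm Im}(X)$ with respect to $\omega_\phi$. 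By Moser's trick applied within the fixed ${\rm Im}(X)$-invariant cohomology class, the oscillation $\max_M\theta - \min_M\theta$ equals the length of the moment-map image and is therefore a topological invariant depending only on $X$ and $[\omega_0]$. Combined with the normalization $\int_M e^{\theta}\omega_\phi^n = [\omega_0]^n = \int_M\omega_\phi^n$ and the intermediate value theorem applied to $e^{\theta} - 1$, this furnishes a point where $\theta$ vanishes and hence the desired uniform bound on $|\theta|$, from which (2) follows.

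The main technical step is the deformation invariance of $F$ in (1), which hinges on the integration-by-parts identity displayed above; part (2) then reduces to a standard Hamiltonian-geometric bound. One small subtlety worth flagging is the reduction to the case $\omega_\phi > 0$ in (2) when $\omega_\phi$ is only semi-positive, which should follow by replacing $\phi$ with $(1-\delta)\phi$ and letting $\delta\downarrow 0$ while tracking the uniform constants.
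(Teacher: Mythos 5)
Your argument is correct, and it is worth noting that the paper itself does not prove this proposition at all: it simply quotes \cite[Proposition 1.1]{TZ02} and \cite[Corollary 5.3]{Zhu00} (or \cite[Section 2.3]{BN14}). Your proof of (1) reproduces the Tian--Zhu argument essentially verbatim: the identity $i_X(\sqrt{-1}\partial\bar\partial\phi)=\sqrt{-1}\bar\partial(X(\phi))$ from holomorphy, reality of $\theta_X+X(\phi)$ (so the antiholomorphic difference is a constant), and the deformation invariance of $\int_M e^{\theta_X+X(\phi_s)}\omega_{\phi_s}^n$ along $\phi_s=s\phi$ via the contraction identity $X(\psi)\,\omega^n=n\sqrt{-1}\,\partial\psi\wedge\bar\partial f\wedge\omega^{n-1}$ and Stokes; this identity is purely algebraic (contract $\partial\psi\wedge\omega^n=0$ with $X$), so it causes no trouble where $\omega_{\phi_s}$ degenerates. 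For (2) your route is correct but heavier than the standard one: once you know the extrema of $\theta:=\theta_X+X(\phi)$ are attained on the zero set $Z(X)$ (which is metric-independent), you can finish in one line, since $X(\phi)=0$ on $Z(X)$ gives $\min_{Z(X)}\theta_X\le\theta\le\max_{Z(X)}\theta_X$ directly, with no need for equivariant Moser, the convexity of the moment image, or the normalization-plus-intermediate-value anchoring; this is essentially the argument in Zhu/Berman--Nystr\"om. Your Moser step is legitimate (the linear path $(1-s)\omega_0+s\omega_\phi$ consists of ${\rm Im}(X)$-invariant K\"ahler forms in the fixed class, and one can choose the invariant primitive $-\tfrac12 d^c\phi$ to make the isotopy equivariant), but you should state these points if you keep that route, and your $\delta$-regularization to reduce to $\omega_\phi>0$ is fine. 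In short: correct, with (1) identical to the cited proof and (2) correct but avoidably roundabout.
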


Since MTKRF preserves the initial K\"ahler class $[\omega_0]$, we can reduce MTKRF to the Monge-Amp\`ere flow:
\begin{equation}
\begin{cases} \label{MAF}
\frac{\partial \varphi_{\epsilon}}{\partial t}=\log \frac{\omega_{\varphi_{\epsilon}}^n}{\omega_0^n} +F_0+\gamma (k\chi+\varphi_{\epsilon})+\log(\prod_{i=1}^d (\epsilon^2+|s_i|_{H_i}^2))^{(1-\beta)\tau_i}+\theta_X(\omega_{\varphi_{\epsilon}})\\
\varphi_{\epsilon}|_{t=0}=c_{\epsilon 0}.
\end{cases}
\end{equation}
where $c_{\epsilon 0}$ is a real constant such that $c_{\epsilon 0} \xrightarrow{\epsilon \to 0} c_0$ and $F_0$ is the Ricci potential with respect to $\omega_0$:
\begin{equation}
\begin{cases}
-{\rm Ric}(\omega_0)+\omega_0=\sqrt{-1} \partial \bar{\partial}F_0\\
\int_X e^{-F_0}\omega_0^n=[\omega_0]^n.
\end{cases}
\end{equation}
We offten use the twisted Ricci potential $F_{\epsilon}$ defined by
\[
F_{\epsilon}:=F_0+\log \left(\frac{\omega_{\epsilon}^n}{\omega_0^n} \cdot \prod_{i=1}^d (\epsilon^2+|s_i|_{H_i}^2)^{(1-\beta)\tau_i} \right).
\]
\begin{remark} \label{feu}
According to \cite{CGP13}, we see that $F_{\epsilon}$ is uniformly bounded.
\end{remark}
Then the flow \eqref{MAF} can be written as
\[
\begin{cases}
\frac{\partial \varphi_{\epsilon}}{\partial t}=\log \frac{\omega_{\varphi_{\epsilon}}^n}{\omega_{\epsilon}^n} +F_{\epsilon}+\gamma (k\chi+\varphi_{\epsilon})+\theta_X(\omega_{\varphi_{\epsilon}})\\
\varphi_{\epsilon}|_{t=0}=c_{\epsilon 0}.
\end{cases}
\]
\section{$C^0$-estimate, volume ratio estimate and uniform Laplacian estimate} \label{sec3}
In this section, we establish the uniform Laplacian estimate of MTKRF. First, we show the volume ratio estimate and $C^0$-estimate:
\begin{proposition} \label{ces}
Let $\varphi_{\epsilon}$ be the solution of \eqref{MAF}. Then there exists a uniform constant $C$ (independent of $\epsilon$ and $t$) such that
\[
\sup_{M \times [0,T]} |\varphi_{\epsilon}| \leq C^{\gamma T},
\]
\[
\sup_{M \times [0,T]} |\dot{\varphi}_{\epsilon}| \leq Ce^{\gamma T}.
\]
\end{proposition}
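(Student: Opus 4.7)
The plan is to apply the parabolic maximum principle directly to the reduced Monge--Amp\`ere flow
\[
\dot\varphi_\epsilon = \log\frac{\omega_{\varphi_\epsilon}^n}{\omega_\epsilon^n} + F_\epsilon + \gamma(k\chi+\varphi_\epsilon) + \theta_X(\omega_{\varphi_\epsilon})
\]
and to its time-derivative. Three uniform (in $\epsilon$) ingredients will be used throughout: $F_\epsilon$ is bounded by Remark \ref{feu}; $k\chi$ is bounded by \eqref{ucc}; and, writing $\omega_{\varphi_\epsilon} = \omega_0 + \sqrt{-1}\partial\bar\partial(k\chi+\varphi_\epsilon)$, Proposition \ref{pfe} gives
\[
\theta_X(\omega_{\varphi_\epsilon}) = \theta_X + X(k\chi+\varphi_\epsilon), \qquad |X(k\chi+\varphi_\epsilon)| \le C.
\]

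For the $C^0$-bound on $\varphi_\epsilon$, let $U(t) := \sup_M \varphi_\epsilon(\cdot,t)$ and pick a spatial maximum point $p_0$. There $\sqrt{-1}\partial\bar\partial\varphi_\epsilon \le 0$, so $\omega_{\varphi_\epsilon}(p_0,t) \le \omega_\epsilon(p_0)$, whence $\log(\omega_{\varphi_\epsilon}^n/\omega_\epsilon^n)(p_0,t) \le 0$. Combined with the uniform bounds recalled above, the flow forces
\[
U'(t) = \dot\varphi_\epsilon(p_0,t) \le C + \gamma\, U(t).
\]
Gr\"onwall's inequality together with $U(0)=c_{\epsilon 0}$ (uniformly bounded since $c_{\epsilon 0}\to c_0$) yields $U(T) \le Ce^{\gamma T}$. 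The lower bound for $\inf_M \varphi_\epsilon(\cdot,t)$ is obtained by reversing signs at a spatial minimum, where $\log(\omega_{\varphi_\epsilon}^n/\omega_\epsilon^n) \ge 0$.

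For the bound on $\dot\varphi_\epsilon$, differentiate the flow in $t$. Using $\partial_t \log(\omega_{\varphi_\epsilon}^n/\omega_\epsilon^n) = \Delta_{\omega_{\varphi_\epsilon}} \dot\varphi_\epsilon$ and, via Proposition \ref{pfe}(1), $\partial_t \theta_X(\omega_{\varphi_\epsilon}) = X(\dot\varphi_\epsilon)$, one finds that $u := \dot\varphi_\epsilon$ satisfies the linear parabolic equation
\[
\partial_t u = \Delta_{\omega_{\varphi_\epsilon}} u + X(u) + \gamma u.
\]
At a spatial maximum of $u$, $\Delta_{\omega_{\varphi_\epsilon}} u \le 0$ and $X(u) = 0$ (because $du$ vanishes there and $X$ is a first-order operator), so the maximum principle yields $\sup_M u(\cdot,t) \le \sup_M u(\cdot,0)\, e^{\gamma t}$. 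The initial value $u|_{t=0} = F_\epsilon + \gamma(k\chi + c_{\epsilon 0}) + \theta_X(\omega_\epsilon)$ is uniformly bounded by the three ingredients above, and the lower bound is symmetric.

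The whole argument is a rather standard application of the maximum principle; the only step requiring genuine care is the time-differentiation of $\theta_X(\omega_{\varphi_\epsilon})$, which must produce exactly $X(\dot\varphi_\epsilon)$ and therefore vanish at spatial critical points of $u$. This is where Proposition \ref{pfe} plays its essential role, allowing the modified flow to be treated by the same technique that works for the unmodified Monge--Amp\`ere flow.
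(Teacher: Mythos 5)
Your proof is correct, and for the $\dot\varphi_{\epsilon}$-bound it is exactly the paper's argument: differentiate \eqref{MAF} in $t$, use $\partial_t\theta_X(\omega_{\varphi_\epsilon})=X(\dot\varphi_\epsilon)$ (via Proposition \ref{pfe}) to get $\partial_t\dot\varphi_\epsilon=(\Delta_{\omega_{\varphi_\epsilon}}+X)\dot\varphi_\epsilon+\gamma\dot\varphi_\epsilon$, apply the maximum principle, and bound $\dot\varphi_\epsilon(0)=F_\epsilon+\gamma(k\chi+c_{\epsilon 0})+\theta_X+X(k\chi)$ uniformly by \eqref{ucc}, Remark \ref{feu} and Proposition \ref{pfe}. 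Where you diverge is the $C^0$-bound on $\varphi_\epsilon$: the paper simply integrates the $\dot\varphi_\epsilon$-bound in time, whereas you run an independent Aubin--Yau-type argument at spatial extrema, using that $\log(\omega_{\varphi_\epsilon}^n/\omega_\epsilon^n)$ has a sign at a maximum (resp. minimum) of $\varphi_\epsilon(\cdot,t)$, plus Gr\"onwall on $\sup_M\varphi_\epsilon$. Both routes rest on the same three uniform inputs; the paper's is shorter once $\dot\varphi_\epsilon$ is controlled, while yours decouples the zeroth-order estimate from the time-derivative estimate, which can be convenient if one only wants the $C^0$-bound. Two small points to keep in mind: your appeal to Proposition \ref{pfe} with $\phi=k\chi+\varphi_\epsilon$ uses that this potential remains ${\rm Im}(X)$-invariant and that $\omega_0+\sqrt{-1}\partial\bar\partial(k\chi+\varphi_\epsilon)=\omega_{\varphi_\epsilon}>0$ along the flow (the paper uses the same facts implicitly), and when $\gamma=0$ both your Gr\"onwall bound and the paper's time-integration only give growth linear in $T$, so the stated form $Ce^{\gamma T}$ should be read as a bound uniform in $\epsilon$ for each fixed $T$ --- this wrinkle is in the statement itself, not in your argument.
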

\begin{proof}
Differentiating the equation \eqref{MAF} in $t$, we have
\[
\frac{d \dot{\varphi}_{\epsilon}}{dt}=(\Delta_{\omega_{\varphi_{\epsilon}}}+X)\dot{\varphi}_{\epsilon}+\gamma\dot{\varphi}_{\epsilon}.
\]
By the maximum principle, we have
\[
|\dot{\varphi}_{\epsilon}(t)| \leq |\dot{\varphi}(0)| e^{\gamma t},
\]
where $\dot{\varphi}(0)=F_{\epsilon}+\gamma(k\chi+c_{\epsilon 0})+\theta_X+X(k\chi)$. Thus, by \eqref{ucc}, Proposition \ref{pfe} and Remark \ref{feu}, we know that $|\dot{\varphi}(0)| \leq C$ for some uniform constant $C$. Then we have
\[
|\dot{\varphi}_{\epsilon}(t)| \leq Ce^{\gamma t}.
\]
Integrating with respect to $t$, we get
\[
|\varphi_{\epsilon}(t)| \leq Ce^{\gamma t}
\]
as desired.
\end{proof}
As in the arguments in \cite[Proposition 3.1]{LZ17} and \cite[Theorem 4.3]{JLZ16}, we can show the uniform Laplacian estimate for MTKRF:
\begin{proposition} \label{ufl}
Let $\varphi_{\epsilon}$ be a solution of \eqref{MAF}. Assume that there exists a uniform constant $C>0$ such that
\begin{enumerate}
\item $\sup_{M \times [0,T]} |\varphi_{\epsilon}|<C$,
\item $\sup_{M \times [0,T]} |\dot{\varphi}_{\epsilon}|<C$.
\end{enumerate}
Then there exists a uniform constant $A=A(\lambda,\{\tau_i\}, \beta, \omega_0, X, C)$ such that
\begin{equation} \label{ule}
A^{-1}\omega_{\epsilon} \leq \omega_{\varphi_{\epsilon}} \leq A \omega_{\epsilon}.
\end{equation}
\end{proposition}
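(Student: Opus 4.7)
The plan is a parabolic Aubin--Yau / Chern--Lu maximum principle on $\log\mathrm{tr}_{\omega_\epsilon}\omega_{\varphi_\epsilon}$ minus a barrier adapted to the conical reference, in the spirit of \cite[Proposition 3.1]{LZ17} and \cite[Theorem 4.3]{JLZ16}, with additional bookkeeping for the drift operator coming from the modification. Setting $u_\epsilon:=\log\mathrm{tr}_{\omega_\epsilon}\omega_{\varphi_\epsilon}$, I would first differentiate \eqref{MAF}, trace with $\omega_\epsilon$, and apply the standard second-order computation to obtain a parabolic inequality of schematic form
\[
(\partial_t-\Delta_{\omega_{\varphi_\epsilon}}-X)u_\epsilon \;\leq\; \gamma + O(1) + \mathcal{B}_\epsilon,
\]
where $\mathcal{B}_\epsilon$ gathers the contributions of $\mathrm{Ric}(\omega_\epsilon)$ and the bisectional curvature of $\omega_\epsilon$. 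Using the identity $\mathrm{Ric}(\omega_\epsilon)=-\sqrt{-1}\partial\bar\partial F_\epsilon+\gamma\omega_0+(1-\beta)\eta_\epsilon$, which follows from the definition of $F_\epsilon$ and of the Ricci potential $F_0$, together with the explicit conical form of $\omega_\epsilon$, both contributions are dominated up to bounded error by a multiple of $\mathrm{tr}_{\omega_{\varphi_\epsilon}}\eta_\epsilon$, which blows up near $D$ but only in a controlled way.

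Since $\eta_\epsilon-\lambda\omega_0=\sqrt{-1}\partial\bar\partial\sum_{i=1}^d\tau_i\log(\epsilon^2+|s_i|_{H_i}^2)$, the natural test function is
\[
G_\epsilon := u_\epsilon-A\varphi_\epsilon+A(1-\beta)\sum_{i=1}^d\tau_i\log(\epsilon^2+|s_i|_{H_i}^2),
\]
with $A$ a large constant depending only on $\lambda,\{\tau_i\},\beta,\omega_0,X$ and the constant in (1)--(2), chosen so that the $\sqrt{-1}\partial\bar\partial$ of the last term, traced with $\omega_{\varphi_\epsilon}$, dominates the singular part of $\mathcal{B}_\epsilon$; the term $-A\varphi_\epsilon$ then supplies the absorbing coefficient $A\,\mathrm{tr}_{\omega_{\varphi_\epsilon}}\omega_\epsilon-An$ coming from $-A\Delta_{\omega_{\varphi_\epsilon}}\varphi_\epsilon$. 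For each fixed $\epsilon>0$ the function $G_\epsilon$ is smooth, so its spacetime maximum on $M\times[0,T]$ is attained at some $(p_0,t_0)$.

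The novel feature compared with \cite{LZ17} is the drift contribution $-X G_\epsilon$. The term $-X(A\varphi_\epsilon)$ is uniformly controlled by Proposition \ref{pfe}(2); the term $-Xu_\epsilon$ is absorbed in the standard way by commuting $X$ through the logarithm and the trace; and the crucial singular piece $-A(1-\beta)\sum_i\tau_i X\!\left(\log(\epsilon^2+|s_i|^2_{H_i})\right)$ is uniformly bounded precisely by the standing hypothesis $|X(\log|s_D|_{H_D}^2)|<C$, after the elementary telescoping using $|s_i|^2_{H_i}/(\epsilon^2+|s_i|^2_{H_i})\leq 1$ and the $\mathrm{Im}(X)$-invariance of each $|s_i|^2_{H_i}$. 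Altogether $-X G_\epsilon$ is uniformly bounded.

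Plugging these bounds into the parabolic inequality for $G_\epsilon$ at $(p_0,t_0)$ and rewinding through the assumed $C^0$-bounds on $\varphi_\epsilon$ and $\dot\varphi_\epsilon$, together with \eqref{ucc}, Remark \ref{feu}, and Proposition \ref{pfe}, yields an algebraic upper bound on $\mathrm{tr}_{\omega_{\varphi_\epsilon}}\omega_\epsilon$ at that point; combined with the volume ratio $\omega_{\varphi_\epsilon}^n/\omega_\epsilon^n$ being uniformly bounded above and below (which the flow equation provides under the same hypotheses), standard linear algebra converts this into the two-sided estimate \eqref{ule} uniformly in $\epsilon$ and $t\in[0,T]$. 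The main obstacle is exactly the third paragraph above: without $|X(\log|s_D|_{H_D}^2)|<C$, the singular barrier that tames the bisectional curvature of $\omega_\epsilon$ would produce an unbounded drift under $-X$ and the whole maximum-principle argument would collapse. This is where the hypothesis enters and, as noted in the remark following Theorem \ref{tck}, it is in any case geometrically necessary.
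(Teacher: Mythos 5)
Your overall scheme (parabolic Chern--Lu maximum principle on $\log\mathrm{tr}_{\omega_\epsilon}\omega_{\varphi_\epsilon}-A\varphi_\epsilon+\text{barrier}$, with the hypothesis $|X(\log|s_D|^2_{H_D})|<C$ entering through the drift terms) is the right shape, but the choice of barrier is a genuine gap. You take $A(1-\beta)\sum_i\tau_i\log(\epsilon^2+|s_i|^2_{H_i})$, whereas the paper (following Guenancia--P\u{a}un and Liu--Zhang) uses the auxiliary function $\Psi_{\epsilon,\rho}=\widetilde C\sum_i\chi_\rho(\epsilon^2+|s_i|^2_{H_i})$, and the difference is not cosmetic. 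First, your barrier is not uniformly bounded: near $D$ it behaves like $2A(1-\beta)\sum\tau_i\log|s_i|\to-\infty$ as $\epsilon\to0$, so even if the maximum principle closed at $(p_0,t_0)$, unwinding the inequality $G_\epsilon(x)\le G_\epsilon(p_0,t_0)$ at an arbitrary point only yields $\mathrm{tr}_{\omega_\epsilon}\omega_{\varphi_\epsilon}\le C\prod_i(\epsilon^2+|s_i|^2_{H_i})^{-A(1-\beta)\tau_i}$, which degenerates near $D$ and is not the uniform two-sided bound \eqref{ule}. Second, the singular term you must absorb is not ``a multiple of $\mathrm{tr}_{\omega_{\varphi_\epsilon}}\eta_\epsilon$'': in the Chern--Lu computation the bad term is the full bisectional curvature of $\omega_\epsilon$ contracted twice against $\omega_{\varphi_\epsilon}$ (the $\frac{1+\varphi_{\epsilon i\bar i}}{1+\varphi_{\epsilon j\bar j}}R_{\omega_\epsilon}{}^{i\bar i}{}_{j\bar j}$ terms), which is unbounded near $D$ and is \emph{not} dominated by a Ricci-type trace. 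The positive part of $\sqrt{-1}\partial\bar\partial\log(\epsilon^2+|s|^2)$ is of size $\epsilon^2|Ds|^2/(\epsilon^2+|s|^2)^2$ and dies off outside the $\epsilon$-neighborhood of $D$, so it cannot dominate that curvature; this is exactly why the bounded function $\Psi_{\epsilon,\rho}$, with its carefully chosen small exponent $\rho$ and the property $\omega_0+k'\sqrt{-1}\partial\bar\partial\Psi_{\epsilon,\rho}\ge0$, is needed (the latter also gives $|X(\Psi_{\epsilon,\rho})|\le C$ via Proposition \ref{pfe}).

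Two smaller points. The statement that $-Xu_\epsilon$ ``is absorbed in the standard way'' glosses over the real issue: the $X$-contribution produces the third-order term $\frac{1}{\mathrm{tr}_{\omega_\epsilon}\omega_{\varphi_\epsilon}}\sum_i X^j\varphi_{\epsilon i\bar i j}$, which in the paper is handled by the first-order vanishing of the test function at the maximum point, converting it into $X(B\varphi_\epsilon-\Psi_{\epsilon,\rho})$, bounded by Proposition \ref{pfe}; without this step (or some substitute) the argument does not close. Also, the hypothesis $|X(\log|s_D|^2_{H_D})|<C$ is used in the paper to bound the divergence-type quantity $\max_i\{\sup_M X^i{}_{,i},0\}$ with respect to $\omega_\epsilon$ (via \cite[Lemma A.2]{JLZ16}), which controls $\sum_i X^i{}_{,i}(1+\varphi_{\epsilon i\bar i})\le C\,\mathrm{tr}_{\omega_\epsilon}\omega_{\varphi_\epsilon}$; your proposed use of it, to bound $X(\log(\epsilon^2+|s_i|^2))$ componentwise, also needs care since the hypothesis only controls the weighted sum $\sum_i\tau_iX(\log|s_i|^2_{H_i})$, not each summand separately. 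To repair the proof, replace your barrier by the Guenancia--P\u{a}un function $\Psi_{\epsilon,\rho}$ and handle the $X$-terms as above; the rest of your outline then matches the paper's argument.
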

\begin{proof}
We choose local normal coordinates $(z^i)$ with respect to $\omega_{\epsilon}$ where $\omega_{\varphi_{\epsilon}}$ is diagonal, and reduce to local computation. Then we observe that
\begin{eqnarray*}
\left( \frac{d}{dt}-\Delta_{\omega_{\varphi_{\epsilon}}} \right) \log {\rm tr}_{\omega_{\epsilon}} \omega_{\varphi_{\epsilon}}&=&\frac{1}{{\rm tr}_{\omega_{\epsilon}} \omega_{\varphi_{\epsilon}}}\left( \Delta_{\omega_{\epsilon}} \left(\dot{\varphi}_{\epsilon}-\log \frac{\omega_{\varphi_{\epsilon}}^n}{\omega_{\epsilon}^n} \right)+R_{\omega_{\epsilon}} \right)\\
&\hbox{}&-\frac{1}{{\rm tr}_{\omega_{\epsilon}}{\omega_{\varphi_{\epsilon}}}}(g_{\varphi_{\epsilon}}^{p\bar{q}} g_{\varphi_{\epsilon} j\bar{m}}R_{\omega_{\epsilon}p\bar{q}}^{\bar{m} j})\\
&\hbox{}&+\left\{ \frac{g_{\varphi_{\epsilon}}^{\delta \bar{k}} \partial_{\delta} {\rm tr}_{\omega_{\epsilon}} \omega_{\varphi_{\epsilon}} \partial_{\bar{k}} {\rm tr}_{\omega_{\epsilon}} \omega_{\varphi_{\epsilon}}}{({\rm tr}_{\omega_{\epsilon}}\omega_{\varphi_{\epsilon}})^2}-\frac{g_{\epsilon}^{\gamma \bar{s}} \varphi_{\epsilon \gamma}{}^t{}_p\varphi_{\epsilon \bar{s} t}{}^p}{{\rm tr}_{\omega_{\epsilon}} \omega_{\varphi_{\epsilon}}} \right\}.
\end{eqnarray*}
The computation in \cite[Theorem 3.9]{Tos15} implies that
\[
\frac{g_{\varphi_{\epsilon}}^{\delta \bar{k}} \partial_{\delta} {\rm tr}_{\omega_{\epsilon}} \omega_{\varphi_{\epsilon}} \partial_{\bar{k}} {\rm tr}_{\omega_{\epsilon}} \omega_{\varphi_{\epsilon}}}{({\rm tr}_{\omega_{\epsilon}}\omega_{\varphi_{\epsilon}})^2}-\frac{g_{\epsilon}^{\gamma \bar{s}} \varphi_{\epsilon \gamma}{}^t{}_p\varphi_{\epsilon \bar{s} t}{}^p}{{\rm tr}_{\omega_{\epsilon}} \omega_{\varphi_{\epsilon}}} \leq 0.
\]
Since
\[
g_{\varphi_{\epsilon}}^{p \bar{q}} g_{\varphi_{\epsilon}j \bar{m}} R_{\omega_{\epsilon}}^{\bar{m}j}{}_{p \bar{q}}=\frac{1+\varphi_{\epsilon i \bar{i}}}{1+\varphi_{\epsilon j \bar{j}}} R_{\omega_{\epsilon}}^{i\bar{i}}{}_{j\bar{j}},
\]
\[
n={\rm tr}_{\omega_{\epsilon} \omega_0}+k {\rm tr}_{\omega_{\epsilon}}(\sqrt{-1}\partial \bar{\partial}\chi) \geq k \Delta_{\omega_{\epsilon}} \chi,
\]
\[
\frac{\Delta_{\omega_{\epsilon}}\varphi_{\epsilon}}{{\rm tr}_{\omega_{\epsilon}} \omega_{\varphi_{\epsilon}}}=\frac{\sum_i \varphi_{\epsilon i \bar{i}}}{\sum_i(1+\varphi_{\epsilon i \bar{i}})} \leq 1,
\]
we have
\begin{eqnarray*}
\left( \frac{d}{dt}-\Delta_{\omega_{\varphi_{\epsilon}}} \right) \log {\rm tr}_{\omega_{\epsilon}} \omega_{\varphi_{\epsilon}} &\leq&-\frac{1}{{\rm tr}_{\omega_{\epsilon}} \omega_{\varphi_{\epsilon}}}
\sum_{i,j} \frac{1+\varphi_{\epsilon i \bar{i}}}{1+\varphi_{\epsilon j \bar{j}}} R_{\omega_{\epsilon}}^{i\bar{i}}{}_{j\bar{j}}\\
&\hbox{}& +\frac{1}{{\rm tr}_{\omega_{\epsilon}} \omega_{\varphi_{\epsilon}}} \Delta_{\omega_{\epsilon}}(F_{\epsilon}+\gamma (k\chi+\varphi_{\epsilon})+\theta_X(\omega_{\varphi_{\epsilon}}))
+R_{\omega_{\epsilon}}\\
&\leq& -\frac{1}{{\rm tr}_{\omega_{\epsilon}} \omega_{\varphi_{\epsilon}}} \sum_{i \leq j} \left( \frac{1+\varphi_{\epsilon i \bar{i}}}{1+\varphi_{\epsilon j \bar{j}}}+\frac{1+\varphi_{\epsilon j \bar{j}}}{1+\varphi_{\epsilon i \bar{i}}}-2 \right) R_{\omega_{\epsilon}}^{i\bar{i}}{}_{j\bar{j}}\\
&\hbox{}&+\frac{1}{{\rm tr}_{\omega_{\epsilon}} \omega_{\varphi_{\epsilon}}}(\Delta_{\omega_{\epsilon}}F_{\epsilon})+\frac{\gamma n}{{\rm tr}_{\omega_{\epsilon}} \omega_{\varphi_{\epsilon}}}+\gamma
+ \frac{1}{{\rm tr}_{\omega_{\epsilon}} \omega_{\varphi_{\epsilon}}} \Delta_{\omega_{\epsilon}}\theta_X(\omega_{\varphi_{\epsilon}}).
\end{eqnarray*}
Let $C_1$ be a uniform constant such that
\[
\sqrt{-1}\partial \bar{\partial}F_0 \geq -C_1 \omega_0.
\]
Then, by \eqref{eso}, we have
\[
0 \leq {\rm tr}_{\omega_{\epsilon}}(\sqrt{-1}\partial \bar{\partial}F_0+C_1\omega_0) \leq \nu^{-1} {\rm tr}_{\omega_0}(\sqrt{-1}\partial \bar{\partial}F_0+C_1\omega_0) =\nu^{-1}(C_1n+\Delta_{\omega_0} F_0).
\]
Hence we have the uniform bound of $\Delta_{\omega_{\epsilon}} F_0$:
\[
-C_1\nu^{-1} \leq - C_1{\rm tr}_{\omega_{\epsilon}} \omega_0\leq \Delta_{\omega_{\epsilon}} F_0 \leq \nu^{-1}(C_1n+\Delta_{\omega_0} F_0).
\]
Now we recall the arguments in \cite[Section 2, Section 3, Section 4]{GP16}. We set
\[
\chi_{\rho}(\epsilon^2+u)=\frac{1}{\rho} \int_0^u \frac{(\epsilon^2+r)^{\rho}-\epsilon^{2 \rho}}{r}dr
\]
and define the ``auxiliary function'' $\Psi_{\epsilon,\rho}$ by
\[
\Psi_{\epsilon,\rho}:=\widetilde{C} \sum_{i=1}^d \chi_{\rho}(\epsilon^2+|s_i|_{H_i}^2),
\]
where $\widetilde{C}>0$ and $\rho>0$ are constants. Then the function $\Psi_{\epsilon,\rho}$ is uniformly bounded.
After taking suitable uniform constants $\widetilde{C}$, $\rho$ and $C_2$, we have
\begin{eqnarray*}
-\sum_{i \geq j} \left( \frac{1+\varphi_{\epsilon i \bar{i}}}{1+\varphi_{\epsilon j \bar{j}}}+\frac{1+\varphi_{\epsilon j \bar{j}}}{1+\varphi_{\epsilon i \bar{i}}}-2 \right) R_{\omega_{\epsilon}}^{i\bar{i}}{}_{j\bar{j}}-{\rm tr}_{\omega_{\epsilon}}\omega_{\varphi_{\epsilon}} \Delta_{\omega_{\varphi_{\epsilon}}} \Psi_{\epsilon, \rho}+\Delta_{\omega{\epsilon}}F_\epsilon \\
\leq C_2 \sum_{i \leq j} \left( \frac{1+\varphi_{\epsilon i \bar{i}}}{1+\varphi_{\epsilon j \bar{j}}}+\frac{1+\varphi_{\epsilon j \bar{j}}}{1+\varphi_{\epsilon i \bar{i}}} \right)+C_2 {\rm tr}_{\omega_{\varphi_{\epsilon}}} \omega_{\epsilon} \cdot {\rm tr}_{\omega_{\epsilon}}\omega_{\varphi_{\epsilon}}+\Delta_{\omega_{\epsilon}} F_0+ C_2.
\end{eqnarray*}
Combining with the Cauchy-Shwartz inequality $n \leq {\rm tr}_{\omega_{\varphi_{\epsilon}}} \omega_{\epsilon} \cdot {\rm tr}_{\omega_{\epsilon}} \omega_{\varphi_{\epsilon}}$, we get
\begin{eqnarray*}
\left( \frac{d}{dt}-\Delta_{\omega_{\varphi_{\epsilon}}} \right)(\log {\rm tr}_{\omega_{\epsilon}} \omega_{\varphi_{\epsilon}}+\Psi_{\epsilon,\rho})&\leq& \frac{C_2}{{\rm tr}_{\omega_{\epsilon}} \omega_{\varphi_{\epsilon}}} \sum_{i \leq j} \left( \frac{1+\varphi_{\epsilon i \bar{i}}}{1+\varphi_{\epsilon j \bar{j}}}+\frac{1+\varphi_{\epsilon j \bar{j}}}{1+\varphi_{\epsilon i \bar{i}}} \right)\\
&\hbox{}& +\frac{C_3}{{\rm tr}_{\omega_{\epsilon}} \omega_{\varphi_{\epsilon}}}+C_2{\rm tr}_{\omega_{\varphi_{\epsilon}}}\omega_{\epsilon}+\frac{1}{{\rm tr}_{\omega_{\epsilon}} \omega_{\varphi_{\epsilon}}} \Delta_{\omega_{\epsilon}}\theta_X(\omega_{\varphi_{\epsilon}})+C_4\\
&\leq&  \frac{C_2}{{\rm tr}_{\omega_{\epsilon}}\omega_{\varphi_{\epsilon}}} \left\{ \left( \sum_i \frac{1}{1+\varphi_{\epsilon i \bar{i}}} \right) \left( \sum_j(1+\varphi_{\epsilon j \bar{j}}) \right) +n \right\} \\
&\hbox{}& +\frac{C_3}{{\rm tr}_{\omega_{\epsilon}} \omega_{\varphi_{\epsilon}}}+C_2{\rm tr}_{\omega_{\varphi_{\epsilon}}}\omega_{\epsilon}+\frac{1}{{\rm tr}_{\omega_{\epsilon}} \omega_{\varphi_{\epsilon}}} \Delta_{\omega_{\epsilon}}\theta_X(\omega_{\varphi_{\epsilon}})+C_4\\
&\leq& C_5{\rm tr}_{\omega_{\varphi_{\epsilon}}} \omega_{\epsilon}+\frac{1}{{\rm tr}_{\omega_{\epsilon}} \omega_{\varphi_{\epsilon}}} \Delta_{\omega_{\epsilon}}\theta_X(\omega_{\varphi_{\epsilon}})+C_4.
\end{eqnarray*}
Since $\max_{i=1,\ldots,n} \{\sup_M X^i{}_{,i},0\} \leq C_6$ is uniformly bounded\footnote{We need the assumption $|X(\log|s_D|_{H_D}^2)|<C$ to get this uniform bound.} (cf. \cite[Lemma A.2]{JLZ16}), we get
\begin{eqnarray*}
\Delta_{\omega_{\epsilon}}\theta_X(\omega_{\varphi_{\epsilon}})&=& \sum_i \theta_X(\omega_{\varphi_{\epsilon}})_{i \bar{i}}\\
&=& \sum_i(X^j g_{\varphi_{\epsilon}j \bar{i}})_i \\
&=& \sum_i(X^j{}_{,i}g_{\varphi_{\epsilon}j \bar{i}}+X^j g_{\varphi_{\epsilon}j \bar{i},i})\\
&=& \sum_i(X^j{}_{,i}g_{\varphi_{\epsilon}j \bar{i}}+X^j g_{\varphi_{\epsilon}i \bar{i},j})\\
&=& \sum_i X^i{}_{,i}(1+\varphi_{\epsilon i \bar{i}})+\sum_i X^j \varphi_{{\epsilon}i \bar{i} j}\\
&\leq& C_6 {\rm tr}_{\omega_{\epsilon}} \omega_{\varphi_{\epsilon}}+\sum_i X^j \varphi_{{\epsilon}i \bar{i} j}.
\end{eqnarray*}
On the other hand, from the assumption (2), we know that
\[
\left(\frac{d}{dt}-\Delta_{\omega_{\varphi_{\epsilon}}} \right)\varphi_{\epsilon}=\dot{\varphi}_{\epsilon}-{\rm tr}_{\omega_{\varphi_{\epsilon}}}(\omega_{\varphi_{\epsilon}}-\omega_{\epsilon})
=\dot{\varphi}-n+{\rm tr}_{\omega_{\varphi_{\epsilon}} \omega_{\epsilon}}
\geq {\rm tr}_{\omega_{\varphi_{\epsilon}}} \omega_{\epsilon}-(C+n).
\]
Thus, if we set $B:=C_5+1$, we have
\[
\left( \frac{d}{dt}-\Delta_{\omega_{\varphi_{\epsilon}}} \right)(\log {\rm tr}_{\omega_{\epsilon}} \omega_{\varphi_{\epsilon}}+\Psi_{\epsilon,\rho}-B \varphi_{\epsilon}) \leq - {\rm tr}_{\omega_{\varphi_{\epsilon}}} \omega_{\epsilon}+\frac{1}{{\rm tr}_{\omega_{\epsilon}}\omega_{\varphi_{\epsilon}}} \sum_i X^j \varphi_{\epsilon j \bar{j} i}+C_7.
\]
We assume that the function $\log {\rm tr}_{\omega_{\epsilon}} \omega_{\varphi_{\epsilon}}+\Psi_{\epsilon,\rho}-B \varphi_{\epsilon}$ takes its maximum at $(x_0,t_0) \in M \times [0,T]$. If $t_0=0$, we have $\log {\rm tr}_{\omega_{\epsilon}} \omega_{\varphi_{\epsilon}}+\Psi_{\epsilon,\rho}-B \varphi_{\epsilon} = \log n+\Psi_{\epsilon,\rho}-B c_{\epsilon 0}$, which is uniformly bounded since $\Psi_{\epsilon,\rho}$ and $c_{\epsilon 0}$ is. Now we assume that $t_0>0$. Then,  by the maximum principle, we have
\[
0 \leq - {\rm tr}_{\omega_{\varphi_{\epsilon}}} \omega_{\epsilon}+\frac{1}{{\rm tr}_{\omega_{\epsilon}}\omega_{\varphi_{\epsilon}}} \sum_i X^j \varphi_{\epsilon j \bar{j} i}+C_7
\]
at $(x_0,t_0)$. On the other hand, differentiating the function $\log {\rm tr}_{\omega_{\epsilon}} \omega_{\varphi_{\epsilon}}+\Psi_{\epsilon,\rho}-B \varphi_{\epsilon}$ in $z^j$ implies
\[
\frac{\partial}{\partial z^j}(\log {\rm tr}_{\omega_{\epsilon}} \omega_{\varphi_{\epsilon}}+\Psi_{\epsilon,\rho}-B \varphi_{\epsilon})=\frac{1}{{\rm tr}_{\omega_{\epsilon}}\omega_{\varphi_{\epsilon}}} \varphi_{\epsilon i \bar{i} j}+\Psi_{\epsilon, \rho,j}-B\varphi_{\epsilon j}.
\]
Hence, at $(x_0,t_0)$, we have
\begin{eqnarray*}
\frac{1}{{\rm tr}_{\omega_{\epsilon}}\omega_{\varphi_{\epsilon}}} \sum_i X^j \varphi_{\epsilon j \bar{j} i}=X(B\varphi_{\epsilon}-\Psi_{\epsilon,\rho}).
\end{eqnarray*}
According to \cite[Section 4]{GP16}, we find that there exists a small uniform constant $k'>0$ such that $\omega_0+k' \sqrt{-1}\partial \bar{\partial}\Psi_{\epsilon,\rho} \geq 0$. Thus, combining with Proposition \ref{pfe} implies
\[
|X(\varphi_{\epsilon})| \leq |X(k\chi+\varphi_{\epsilon})|+|X(k\chi)| \leq C_8,
\]
\[
|X(\Psi_{\epsilon,\rho})| \leq C_9.
\]
Thus we have
\[
{\rm tr}_{\omega_{\varphi_{\epsilon}}} \omega_{\epsilon} \leq C_{10}
\]
at $(x_0,t_0)$. Then we observe that
\begin{eqnarray*}
{\rm tr}_{\omega_{\epsilon}} \omega_{\varphi_{\epsilon}}(x_0,t_0) &\leq& \frac{1}{(n-1)!}({\rm tr}_{\omega_{\varphi_{\epsilon}}} \omega_{\epsilon})^{n-1}(x_0,t_0) \cdot \frac{\omega_{\varphi_{\epsilon}^n}}{\omega_{\epsilon}^n}(x_0,t_0)\\
&\leq&\frac{C_{10}^{n-1}}{(n-1)!} \exp(\dot{\varphi}_{\epsilon}-F_{\epsilon}-\gamma (k\chi+\varphi_{\epsilon})-\theta_X-X(k \chi+\varphi_{\epsilon}))(x_0,t_0)\\
&\leq& C_{11}.
\end{eqnarray*}
Since $F_{\epsilon}$ and $\Psi_{\epsilon,\rho}$ are uniformly bounded, we find that
\[
{\rm tr}_{\omega_{\epsilon}} \omega_{\varphi_{\epsilon}} \leq C_{12}
\]
on $M$. Hence the flow equation \eqref{MAF} and the uniform bound of $\varphi_{\epsilon}$, $\dot{\varphi}_{\epsilon}$, $F_{\epsilon}$, $X(k\chi+\varphi_{\epsilon})$ give the desired inequality \eqref{ule} for some uniform constant $A$.
\end{proof}
\section{$C_{\rm loc}^{\infty}$-estimate and completion of the proof of Theorem \ref{tck}} \label{sec4}
In this section, we establish the $C_{\rm loc}^{\infty}$-estimate of MTKRF. Let
\[
\phi_{\epsilon}:=\varphi_{\epsilon}+k\chi.
\]
Then we have
\[
\omega_{\phi_{\epsilon}}:=\omega_0+\sqrt{-1}\partial \bar{\partial}\phi=\omega_{\varphi_{\epsilon}}.
\]
In order to simplify the notation, we drop the explicit dependence of $\epsilon$ and write $\phi$, $\eta$, etc. Then the equation of MTKRF can be written as
\begin{equation} \label{sMAf}
\frac{\partial \omega_{\phi}}{\partial t}=-{\rm Ric}(\omega_{\phi})+\gamma \omega_{\phi}+\widetilde{\eta}+L_X\omega_{\phi},
\end{equation}
where $\widetilde{\eta}:=(1-\beta) \eta \in (1-\gamma) c_1(M)$, or equivalently,
\begin{equation} \label{sMAc}
\frac{d g_{\phi k \bar{l}}}{dt}=-R_{\phi k \bar{l}}+\gamma g_{\phi k \bar{l}}+\widetilde{\eta}_{k \bar{l}}+\nabla_{\phi k} X_{\bar{l}}.
\end{equation}
Then we can reduce the above equation to the Monge-Amp\`ere flow:
\begin{equation} \label{MAf}
\frac{\partial \phi}{\partial t}=\log\frac{\omega_{\phi}^n}{\omega_0^n}+\gamma \phi+F+\theta_X(\omega_{\phi}),
\end{equation}
where $F$ is a twisted Ricci potential $\sqrt{-1}\partial \bar{\partial}F=-{\rm Ric}(\omega_0)+\gamma \omega_0+\widetilde{\eta}$. Let $\nabla_{\phi}$ (resp. $\nabla_0$) be the covariant derivative with respect to $\omega_{\phi}$ (resp. $\omega_0$). We set
\[
S:=|\nabla_0 g_{\phi}|_{\omega_{\phi}}^2=g_{\phi}^{i\bar{j}}g_{\phi}^{k\bar{l}}g_{\phi}^{p\bar{q}}\nabla_{0i} g_{\phi k \bar{q}} \nabla_{0\bar{j}} g_{\phi p \bar{l}}.
\]
If we put
\[
h^i{}_k:=g_0^{i\bar{j}}g_{\phi k\bar{j}},
\]
\[
U_{il}^k:=(\nabla_{\phi i}h \cdot h^{-1})^k{}_l,
\]
then we have
\begin{equation} \label{UgC}
U_{il}^k=\Gamma_{\phi i l}^k-\Gamma_{0 i l}^k,
\end{equation}
\[
S=|U|_{\omega_{\phi}}^2,
\]
where $\Gamma_{\phi i l}^k$ (resp. $\Gamma_{0 i l}^k$) is the Christoffel symbol of $\omega_{\phi}$ (resp. $\omega_0$). The following proposition is an $X$-analogue of \cite[Proposition 3.3]{LZ17}.
\begin{proposition} \label{toe}
Let $p \in M$ and $\phi$ be a solution of the Monge-Amp\`ere flow \eqref{MAf}. We assume that there exists a constant $N>0$ such that
\begin{equation} \label{ulc}
N^{-1}\omega_0 \leq \omega_{\phi} \leq N \omega_0
\end{equation}
on $B_r(p) \times [0,T]$, where $B_r(p)$ is a geodesic ball of radius $r>0$ centered at $p$ with respect to $\omega_0$. Then there exists constants
\[
C'=C'(N, \gamma, \omega_0, X, \| \phi (\cdot,0) \|_{C^3(B_r(p))}, \| \widetilde{\eta} \|_{C^1(B_r(p))})
\]
and
\[
C''=C''(N, \gamma, \omega_0, X, \| \phi (\cdot,0) \|_{C^4(B_r(p))}, \| \widetilde{\eta} \|_{C^2(B_r(p))})
\]
such that
\[
S \leq C',
\]
\[
|{\rm Rm}_{\phi}|_{\omega_{\phi}}^2 \leq C''
\]
on $B_{r/2}(p) \times [0,T]$.
Moreover, for any $k \geq 0$ and $0<\alpha<1$, there exists constants
\[
C_k^i=C_k^i(N, \gamma, \omega_0, X, \|\phi(\cdot,0)\|_{C^{k+4}(B_r(p))}, \|\phi \|_{C^0(B_r(p) \times [0,T])}, \|\widetilde{\eta}\|_{C^{k+2}(B_r(p))}, \|F\|_{C^0(B_r(p))}) \;\; (i=1,2,3)
\]
such that
\[
|D^k {\rm Rm}_{\phi}|_{\omega_{\phi}}^2 \leq C_k^1,
\]
\[
\| \dot{\phi} \|_{C^{k+1, \alpha}} \leq C_k^2,
\]
\[
\| \phi \|_{C^{k+3, \alpha}} \leq C_k^3
\]
on $B_{r/2}(p) \times [0,T]$.
\end{proposition}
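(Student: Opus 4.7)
The plan is to adapt the proof of [LZ17, Proposition 3.3] to the modified flow, with the new input being a careful treatment of the extra contributions coming from $L_X\omega_\phi$. The argument proceeds in three stages: (i) a local maximum principle bound on $S$, (ii) a local maximum principle bound on $|{\rm Rm}_\phi|^2$ using the $S$-bound, and (iii) standard parabolic Evans--Krylov and Schauder theory to obtain all higher-order estimates.

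For stage (i), I would first derive the evolution of $S$ under (\ref{sMAc}). The usual Calabi-type third-order computation produces a good negative quadratic $-c(|\nabla_\phi U|^2+|\overline{\nabla_\phi} U|^2)$ together with terms linear in $S$ arising from the curvature of $\omega_0$ and from $\nabla_0 \widetilde\eta$. The new $X$-contribution comes from differentiating the term $\nabla_{\phi k}X_{\bar l}$ in (\ref{sMAc}); after commuting covariant derivatives and using that $X$ is a fixed holomorphic vector field with smooth coefficients, this produces a transport term of the form $-X(S)$ (harmless since it is first order and linear), a contribution absorbed by Cauchy--Schwarz into the negative quadratic, and a smooth remainder bounded by $C(1+S)$. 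The resulting inequality has the schematic form
\[
\bigl(\partial_t-\Delta_\phi-X\bigr)S \;\leq\; -c\bigl(|\nabla_\phi U|^2+|\overline{\nabla_\phi} U|^2\bigr)+C(1+S).
\]
Localizing with a smooth cutoff $\eta\in C_c^\infty(B_r(p))$ equal to $1$ on $B_{r/2}(p)$ and applying the maximum principle to $\eta^2 S+A\phi$ for sufficiently large $A$, using (\ref{ulc}) to absorb the cutoff gradient terms and Proposition \ref{pfe}(2) to absorb $X(\phi)$, yields $S\leq C'$ on $B_{r/2}(p)\times[0,T]$.

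For stage (ii), the evolution of $|{\rm Rm}_\phi|^2$ under (\ref{sMAc}) has the standard shape with additional terms coming from $\widetilde\eta$ and from $L_X\omega_\phi$; both enter only through smooth bounded data on $B_r(p)$. Another cutoff-plus-maximum-principle argument, applied to $\eta^2|{\rm Rm}_\phi|^2+BS$ with $B$ large enough that the extra $-B|\nabla_\phi U|^2$ absorbs the cubic curvature cross terms, produces $|{\rm Rm}_\phi|^2\leq C''$. For stage (iii), Proposition \ref{pfe} shows $\theta_X(\omega_\phi)=\theta_X+X(\phi)$, so the equation (\ref{MAf}) becomes a uniformly parabolic complex Monge--Amp\`ere equation whose right-hand side depends smoothly on $\phi$ and its first derivatives. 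The parabolic Evans--Krylov theorem then gives a uniform $C^{2,\alpha}$ bound for $\phi$, and differentiating the equation and iterating standard parabolic Schauder estimates on nested subballs gives the constants $C_k^i$ for all $k\geq 0$.

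The principal obstacle is stage (i): the expression $\nabla^2(L_X g_\phi)$ appearing after differentiating $S$ formally contains a third-order piece $X^j \nabla_j \nabla_0 g_\phi$ which cannot be bounded by $S$ directly. The resolution, as in [JLZ16, Theorem 4.3] and in the proof of Proposition \ref{ufl}, is to commute $X^j\nabla_j$ past $\nabla_0$ so that the offending term reassembles into the drift $X(S)$ and is absorbed into the linear parabolic operator, with the remaining pieces controlled by Cauchy--Schwarz against the negative quadratic $-c|\nabla_\phi U|^2$. This is precisely the point at which the holomorphicity of $X$ and the smoothness of $\omega_0$ on $B_r(p)$ are used essentially.
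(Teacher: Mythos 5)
There is a genuine gap at stage (i), and it sits exactly where the modification by $X$ creates the new difficulty. When the term $\nabla_{\phi k}X_{\bar l}$ in \eqref{sMAc} is differentiated in the Calabi computation, the resulting contributions are \emph{not} only a transport term $X(S)$ plus a remainder of size $C(1+S)$: because $\nabla_{\phi l}X^{\beta}=\nabla_{0l}X^{\beta}+X^kU_{lk}^{\beta}$, one also picks up terms of the schematic form $\nabla_{\phi}X\ast U\ast\bar{U}$ (the terms ($X$;III)--($X$;V) in the paper's computation), which are of size $S\,|\nabla_{\phi}X|_{\omega_{\phi}}\sim S^{3/2}$. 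These can be absorbed neither by Cauchy--Schwarz into $-c|\nabla_{\phi}U|_{\omega_{\phi}}^2$ (they contain no second derivative of the metric) nor into $C(1+S)$ (they are superlinear in $S$); and $|\nabla_{\phi}X|_{\omega_{\phi}}$ is not a priori bounded under \eqref{ulc} --- its bound is a \emph{consequence} of the $S$-estimate, so using it here would be circular. Hence your schematic inequality $(\partial_t-\Delta_{\omega_{\phi}}-X)S\le -c(|\nabla_{\phi}U|_{\omega_{\phi}}^2+|\overline{\nabla}_{\phi}U|_{\omega_{\phi}}^2)+C(1+S)$ is not justified; the correct inequality \eqref{eoS} carries the extra term $CS|\nabla_{\phi}X|_{\omega_{\phi}}$. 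The paper does not dispose of this by a drift/first-order-condition argument (that trick works for the Laplacian estimate in Proposition \ref{ufl}, where the offending term is genuinely first order at the maximum point); instead it introduces the quotient auxiliary function $W=\kappa^2 S/(K-|X|_{\omega_{\phi}}^2)+A\,{\rm tr}h$ and uses \eqref{evf}, $(\partial_t-\Delta_{\omega_{\phi}})|X|_{\omega_{\phi}}^2\le-\tfrac12|\nabla_{\phi}X|_{\omega_{\phi}}^2+C$, so that the good term $-cS|\nabla_{\phi}X|_{\omega_{\phi}}^2$ appears \emph{with the factor $S$} and absorbs $S|\nabla_{\phi}X|_{\omega_{\phi}}\le\delta S|\nabla_{\phi}X|_{\omega_{\phi}}^2+C_{\delta}S$, while ${\rm tr}h$ supplies the negative linear term in $S$. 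This idea is missing from your proposal.

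Stage (ii) has the same structural problem: with the additive test function $\eta^2|{\rm Rm}_{\phi}|_{\omega_{\phi}}^2+BS$ and a \emph{fixed} constant $B$, the good term produced by $S$ is only $-B(|\nabla_{\phi}U|_{\omega_{\phi}}^2+|\overline{\nabla}_{\phi}U|_{\omega_{\phi}}^2)$, which by \eqref{utc} is comparable to $-B|{\rm Rm}_{\phi}|_{\omega_{\phi}}^2$ and cannot dominate the cubic reaction term $C|{\rm Rm}_{\phi}|_{\omega_{\phi}}^3$ in \eqref{erf}. The paper again uses a quotient, $G=\mu^2|{\rm Rm}_{\phi}|_{\omega_{\phi}}^2/(L-S)+BS$, so that the negative $S$-terms enter multiplied by $|{\rm Rm}_{\phi}|_{\omega_{\phi}}^2$ and control the cubic and quartic curvature terms. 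Finally, your stage (iii) (parabolic Evans--Krylov plus Schauder) is a different route from the paper, which instead runs a Shi-type induction bounding $|D^kU|_{\omega_{\phi}}^2$ and $|D^k{\rm Rm}_{\phi}|_{\omega_{\phi}}^2$ by cutoff maximum principles and then applies elliptic Schauder estimates to the differentiated flow equation; your route would additionally need care to obtain estimates up to $t=0$ with the stated dependence on $\|\phi(\cdot,0)\|_{C^{k+4}(B_r(p))}$, but this is secondary compared with the gaps in stages (i) and (ii).
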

\begin{proof}
We first establish the local version of Calabi's $C^3$-esitimate. A direct computation shows that
\begin{eqnarray*}
\left(\frac{d}{dt}-\Delta_{\omega_{\phi}} \right)S&=&g_{\phi}^{m\bar{\gamma}} g_{\phi \bar{\mu} \beta} g_{\phi}^{l\bar{\alpha}}((g_{\phi}^{\beta \bar{s}} \nabla_{\phi m} \widetilde{\eta}_{\bar{s}l}-\nabla_{\phi}^{\bar{q}}R_0{}^{\beta}{}_{l\bar{q}m})U_{\bar{\gamma}\bar{\alpha}}^{\bar{\mu}}+U_{ml}^{\beta}(g_{\phi}^{\bar{\mu}s} \nabla_{\phi \bar{\gamma}} \widetilde{\eta}_{s\bar{\alpha}}-\nabla_{\phi}^q R_0{}^{\bar{\mu}}{}_{\bar{\alpha}q\bar{\gamma}}))\\
&\hbox{}& -U_{ml}^{\beta} U_{\bar{\gamma}\bar{\alpha}}^{\bar{\mu}}(\widetilde{\eta}_{p \bar{q}}g_{\phi}^{p \bar{\gamma}}g_{\phi}^{m\bar{q}}g_{\phi \bar{\mu} \beta}g_{\phi}^{l \bar{\alpha}}-g_{\phi}^{m\bar{\gamma}}\widetilde{\eta}_{\bar{\mu}\beta}g_{\phi}^{l\bar{\alpha}}+g_{\phi}^{m\bar{\gamma}}g_{\phi \bar{\mu} \beta}g_{\phi}^{p\bar{\alpha}}g_{\phi}^{l\bar{q}}\widetilde{\eta}_{p \bar{q}}) \\
&\hbox{}& -\gamma S-|\nabla_{\phi}U|_{\omega_{\phi}}^2-|\overline{\nabla}_{\phi}U|_{\omega_{\phi}}^2\\
&\hbox{}& +\underbrace{g_{\phi}^{m\bar{\gamma}} g_{\phi \bar{\mu} \beta} g_{\phi}^{l\bar{\alpha}} \cdot \nabla_{\phi m} \nabla_{\phi l}X^{\beta} \cdot U_{\bar{\gamma}\bar{\alpha}}^{\bar{\mu}}}_\text{($X$;I)}+
\underbrace{g_{\phi}^{m\bar{\gamma}} g_{\phi \bar{\mu} \beta} g_{\phi}^{l \bar{\alpha}} \cdot \nabla_{\phi \bar{\gamma}} \nabla_{\phi \bar{\alpha}} X^{\bar{\mu}} \cdot U_{ml}^{\beta}}_\text{($X$;II)}\\
&\hbox{}&-\underbrace{g_{\phi \bar{\mu} \beta} g_{\phi}^{l \bar{\alpha}}\cdot \nabla_{\phi}^{\bar{\gamma}} X^m \cdot U_{ml}^{\beta} U_{\bar{\gamma}\bar{\alpha}}^{\bar{\mu}}}_\text{($X$;III)}
+\underbrace{g_{\phi}^{m\bar{\gamma}}g_{\phi}^{l\bar{\alpha}} \cdot \nabla_{\phi \beta} X_{\bar{\mu}} \cdot U_{ml}^{\beta} U_{\bar{\gamma}\bar{\alpha}}^{\bar{\mu}}}_\text{($X$;IV)}\\
&\hbox{}& -\underbrace{g_{\phi}^{m \bar{\gamma}} g_{\phi \bar{\mu} \beta} \cdot \nabla_{\phi}^{\bar{\alpha}} X^l \cdot U_{ml}^{\beta} U_{\bar{\gamma}\bar{\alpha}}^{\bar{\mu}}}_\text{($X$;V)},
\end{eqnarray*}
where ($X$;I)-($X$;V) are additional terms arising from the holomorphic vector field $X$.
Since
\begin{equation} \label{foe}
\nabla_{\phi m} \widetilde{\eta}_{l \bar{q}}=\nabla_{0m} \widetilde{\eta}_{l \bar{q}}-U_{ml}^s \widetilde{\eta}_{s \bar{q}},
\end{equation}
\begin{equation}
\nabla_{\phi p}R_0{}^{\beta}{}_{l \bar{q} m}=\nabla_{0p}R_0{}^{\beta}{}_{l\bar{q}m}+U_{ps}^{\beta}R_0{}^s{}_{l \bar{q} m}-U_{pl}^s R_0{}^{\beta}{}_{s \bar{q} m}-U_{pm}^s R_0{}^{\beta}{}_{l \bar{q} s}.
\end{equation}
we have
\begin{eqnarray*}
g_{\phi}^{m\bar{\gamma}} g_{\phi \bar{\mu} \beta} g_{\phi}^{l\bar{\alpha}}((g_{\phi}^{\beta \bar{s}} \nabla_{\phi m} \widetilde{\eta}_{\bar{s}l}-\nabla_{\phi}^{\bar{q}}R_0{}^{\beta}{}_{l\bar{q}m})U_{\bar{\gamma}\bar{\alpha}}^{\bar{\mu}}+U_{ml}^{\beta}(g_{\phi}^{\bar{\mu}s} \nabla_{\phi \bar{\gamma}} \widetilde{\eta}_{s\bar{\alpha}}-\nabla_{\phi}^q R_0{}^{\bar{\mu}}{}_{\bar{\alpha}q\bar{\gamma}}))\\
- U_{ml}^{\beta} U_{\bar{\gamma}\bar{\alpha}}^{\bar{\mu}}(\widetilde{\eta}_{p \bar{q}}g_{\phi}^{p \bar{\gamma}}g_{\phi}^{m\bar{q}}g_{\phi \bar{\mu} \beta}g_{\phi}^{l \bar{\alpha}}-g_{\phi}^{m\bar{\gamma}}\widetilde{\eta}_{\bar{\mu}\beta}g_{\phi}^{l\bar{\alpha}}+g_{\phi}^{m\bar{\gamma}}g_{\phi \bar{\mu} \beta}g_{\phi}^{p\bar{\alpha}}g_{\phi}^{l\bar{q}}\widetilde{\eta}_{p \bar{q}})-\gamma S \leq C_1(S+1),
\end{eqnarray*}
where the constant $C_1$ depends only on $N$, $\gamma$, $\omega_0$ and $\| \widetilde{\eta} \|_{C^1(B_r(p))}$. On the other hand, since
\[
\nabla_{\phi l}X^{\beta}=\nabla_{0l}X^{\beta}+X^k U_{lk}^{\beta},
\]
\[
\nabla_{\phi m} \nabla_{\phi l} X^{\beta}=\nabla_{0m} \nabla_{0l} X^{\beta}-\nabla_{0p}X^{\beta} \cdot U_{ml}^p+\nabla_{0l}X^p \cdot U_{pm}^{\beta}+\nabla_{\phi m}X^k \cdot U_{lk}^{\beta}+X^k \nabla_{\phi m}U_{lk}^{\beta},
\]
in the same way as \cite[Section 6]{PSSW11}, we observe that
\[
|\text{($X$;III)}|+|\text{($X$;IV)}|+|\text{($X$;V)}| \leq C_2 S|\nabla_{\phi}X|_{\omega_{\phi}},
\]
\begin{eqnarray*}
|\text{($X$;I)}|+|\text{($X$;II)}| &\leq& C_3(S+1)+S|\nabla_{\phi}X|_{\omega_{\phi}}+|X|_{\omega_{\phi}}|U|_{\omega_{\phi}}|\nabla_{\phi}U|_{\omega_{\phi}} \\
&\leq& C_3(S+1)+S|\nabla_{\phi}X|_{\omega_{\phi}}+\frac{1}{2} |\nabla_{\phi}U|_{\omega_{\phi}}^2+\frac{1}{2}|X|_{\omega_{\phi}}^2 |U|_{\omega_{\phi}}^2\\
&\leq& C_4(S+1)+\frac{1}{2}|\nabla_{\phi}U|_{\omega_{\phi}}^2+S|\nabla_{\phi}X|_{\omega_{\phi}},
\end{eqnarray*}
where $C_4$ depends only on $X$, $\omega_0$ and $N$. Thus we have
\begin{equation} \label{eoS}
\left( \frac{d}{dt}-\Delta_{\omega_{\phi}} \right)S \leq -\frac{1}{2}|\nabla_{\phi}U|_{\omega_{\phi}}^2-|\overline{\nabla}_{\phi}U|_{\omega_{\phi}}^2+(C_2+1) S|\nabla_{\phi}X|_{\omega_{\phi}}+(C_1+C_4)(S+1).
\end{equation}
On the other hand, the evolution equation of $|X|_{\omega_{\phi}}^2$ can be estimated as
\begin{eqnarray} \label{evf}
\left( \frac{d}{dt}-\Delta_{\omega_{\phi}} \right)|X|_{\omega_{\phi}}^2&=&\gamma |X|_{\omega_{\phi}}^2-|\nabla_{\phi}X|_{\omega_{\phi}}^2+(\widetilde{\eta}_{i\bar{j}}+\nabla_{\phi i}X_{\bar{j}})X^iX^{\bar{j}} \nonumber \\
&\leq&-\frac{1}{2}|\nabla_{\phi}X|_{\omega_{\phi}}^2+C_5.
\end{eqnarray}
Now we work in local normal coordinates $(z^i)$ with respect to $\omega_0$ where $\omega_{\phi}$ is diagonal. Since
\[
0 \leq {\rm tr}h \leq nN,
\]
\[
g_0^{j \bar{s}}g_{\phi}^{p \bar{q}} g_{\phi}^{m \bar{k}} \phi_{j \bar{k} p} \phi_{\bar{s} m \bar{q}} \geq \frac{1}{N} S,
\]
\[
|g_0^{i\bar{j}}\nabla_{\phi i}X_{\bar{j}}| \leq {\rm tr}h \cdot |{\rm tr} \nabla_{\phi} X| \leq C_6(S^{1/2}+1) \leq \frac{1}{N+1}S+C_7,
\]
we observe that
\begin{eqnarray} \label{eqh}
(\frac{d}{dt}-\Delta_{\omega_{\phi}}) {\rm tr}h&=&\gamma {\rm tr}h+g_0^{i \bar{j}}(\widetilde{\eta}_{i \bar{j}}+\nabla_{\phi i}X_{\bar j})-g_{\phi}^{p \bar{q}} g_0^{\beta \bar{\gamma}} g_{\phi \alpha \bar{\gamma}} R_0{}^{\alpha}{}_{\beta \bar{q} p}-g_0^{j \bar{s}}g_{\phi}^{p \bar{q}} g_{\phi}^{m \bar{k}} \phi_{j \bar{k} p} \phi_{\bar{s} m \bar{q}} \nonumber \\
&\leq& C_8-\frac{1}{N(N+1)}S.
\end{eqnarray}

Let $r>r_1>r/2$ and $\kappa$ be a nonnegative smooth cut-off function that is identically equal to $1$ on $\overline{B_{r_1}(p)}$ and vanishes on the outside of $B_r(p)$. Furthermore, we assume that
\[
|\partial \kappa|_{\omega_0}, \;\; |\sqrt{-1}\partial \bar{\partial} \kappa |_{\omega_0} \leq C_9.
\]
We consider the function
\[
W:=\kappa^2 \frac{S}{K-|X|_{\omega_{\phi}}^2}+A {\rm tr}h,
\]
where $K$ is a uniform constant such that $\frac{256}{257} K \leq K-|X|_{\omega_{\phi}}^2 \leq K$ and $A$ is a uniform constant determined later. A direct computation shows that
\begin{eqnarray*}
\left( \frac{d}{dt}-\Delta_{\omega_{\phi}} \right) W &=& (-\Delta_{\omega_{\phi}} \kappa^2) \frac{S}{K-|X|_{\omega_{\phi}}^2}-4{\rm Re}\left( \frac{\kappa \nabla_{\phi} \kappa}{K-|X|_{\omega_{\phi}}^2}, \nabla_{\phi} S \right)_{\omega_{\phi}} \\
&\hbox{}& -4 {\rm Re} \left( \kappa \nabla_{\phi} \kappa, \frac{S \cdot \nabla_{\phi} |X|_{\omega_{\phi}}^2}{(K-|X|_{\omega_{\phi}}^2)^2} \right)_{\omega_{\phi}} + \frac{\kappa^2}{K-|X|_{\omega_{\phi}}^2} \left( \frac{d}{dt}-\Delta_{\omega_{\phi}} \right) S\\
&\hbox{}&+\frac{\kappa^2 S}{(K-|X|_{\omega_{\phi}}^2)^2} \left( \frac{d}{dt}-\Delta_{\omega_{\phi}} \right) |X|_{\omega_{\phi}}^2 -\frac{2 \kappa^2 S (\nabla_{\phi}|X|_{\omega_{\phi}}^2)^2}{(K-|X|_{\omega_{\phi}}^2)^3}\\
&\hbox{}& -\frac{2 \kappa^2 {\rm Re}(\nabla_{\phi}|X|_{\omega_{\phi}}^2, \nabla_{\phi} S)_{\omega_{\phi}}}{(K-|X|_{\omega_{\phi}}^2)^2}+A \left( \frac{d}{dt}-\Delta_{\omega_{\phi}} \right) {\rm tr}h.
\end{eqnarray*}
Using \eqref{eoS}, \eqref{evf} and the facts
\begin{equation}
|\nabla_{\phi}|X|_{\omega_{\phi}}^2|_{\omega_{\phi}} \leq |X|_{\omega_{\phi}} |\nabla_{\phi}X|_{\omega_{\phi}},
\end{equation}
\begin{equation} \label{aes}
|\nabla_{\phi}S|_{\omega_{\phi}}^2 \leq 2S(|\nabla_{\phi}U|_{\omega_{\phi}}^2+|\overline{\nabla}_{\phi}U|_{\omega_{\phi}}^2),
\end{equation}
we observe that
\[
\left|(-\Delta_{\omega_{\phi}} \kappa^2) \frac{S}{K-|X|_{\omega_{\phi}}^2} \right| \leq C_{10}S,
\]
\begin{eqnarray*}
\left| 4{\rm Re}\left( \frac{\kappa \nabla_{\phi} \kappa}{K-|X|_{\omega_{\phi}}^2}, \nabla_{\phi} S \right)_{\omega_{\phi}} \right| &\leq& \frac{4\sqrt{2}}{K-|X|_{\omega_{\phi}}^2} \kappa |\nabla_{\phi} \kappa|_{\omega_{\phi}} S^{1/2} (|\nabla_{\phi}U|_{\omega_{\phi}}^2+|\overline{\nabla}_{\phi}U|_{\omega_{\phi}}^2)^{1/2}\\
&\leq& C_{11}S+\frac{\kappa^2}{4(K-|X|_{\omega_{\phi}}^2)}(|\nabla_{\phi}U|_{\omega_{\phi}}^2+|\overline{\nabla}_{\phi}U|_{\omega_{\phi}}^2),
\end{eqnarray*}
\[
\left| 4 {\rm Re} \left( \kappa \nabla_{\phi} \kappa, \frac{S \cdot \nabla_{\phi} |X|_{\omega_{\phi}}^2}{(K-|X|_{\omega_{\phi}}^2)^2} \right)_{\omega_{\phi}} \right| \leq C_{12}S+\frac{\kappa^2S|\nabla_{\phi}X|_{\omega_{\phi}}^2}{4(K-|X|_{\omega_{\phi}}^2)^2},
\]
\begin{eqnarray*}
\frac{\kappa^2}{K-|X|_{\omega_{\phi}}^2} \left( \frac{d}{dt}-\Delta_{\omega_{\phi}} \right)S &\leq& -\frac{\kappa^2}{2(K-|X|_{\omega_{\phi}}^2)}(|\nabla_{\phi}U|_{\omega_{\phi}}^2+|\overline{\nabla}_{\phi}U|_{\omega_{\phi}}^2)+\frac{(C_2+1)\kappa^2S|\nabla_{\phi}X|_{\omega_{\phi}}}{K-|X|_{\omega_{\phi}}^2}\\
&\hbox{}& +\frac{\kappa^2(C_1+C_4)}{K-|X|_{\omega_{\phi}}^2}(S+1)\\
&\leq& -\frac{\kappa^2}{2(K-|X|_{\omega_{\phi}}^2)}(|\nabla_{\phi}U|_{\omega_{\phi}}^2+|\overline{\nabla}_{\phi}U|_{\omega_{\phi}}^2)+\frac{\kappa^2S|\nabla_{\phi}X|_{\omega_{\phi}}^2}{8(K-|X|_{\omega_{\phi}}^2)^2}\\
&\hbox{}& +C_{13}(S+1),
\end{eqnarray*}
\[
\frac{\kappa^2 S}{(K-|X|_{\omega_{\phi}}^2)^2} \left( \frac{d}{dt}-\Delta_{\omega_{\phi}} \right) |X|_{\omega_{\phi}}^2 \leq -\frac{\kappa^2 S |\nabla_{\phi}X|_{\omega_{\phi}}^2}{2(K-|X|_{\omega_{\phi}}^2)^2}+C_{14}S,
\]
\begin{eqnarray*}
\left| \frac{2 \kappa^2 {\rm Re}(\nabla_{\phi}|X|_{\omega_{\phi}}^2, \nabla_{\phi} S)_{\omega_{\phi}}}{(K-|X|_{\omega_{\phi}}^2)^2} \right| &\leq& \frac{2 \sqrt{2} \kappa^2}{(K-|X|_{\omega_{\phi}}^2)^2}|X|_{\omega_{\phi}}|\nabla_{\phi}X|_{\omega_{\phi}} S^{1/2}(|\nabla_{\phi}U|_{\omega_{\phi}}^2+|\overline{\nabla}_{\phi}U|_{\omega_{\phi}}^2)^{1/2}\\
&\leq& \frac{\kappa^2S|\nabla_{\phi}X|_{\omega_{\phi}}^2}{16(K-|X|_{\omega_{\phi}}^2)^2}+\frac{32\kappa^2|X|_{\omega_{\phi}}^2}{(K-|X|_{\omega_{\phi}}^2)^2}(|\nabla_{\phi}U|_{\omega_{\phi}}^2+|\overline{\nabla}_{\phi}U|_{\omega_{\phi}}^2)\\
&\leq& \frac{\kappa^2S|\nabla_{\phi}X|_{\omega_{\phi}}^2}{16(K-|X|_{\omega_{\phi}}^2)^2}+\frac{\kappa^2}{8(K-|X|_{\omega_{\phi}}^2)}(|\nabla_{\phi}U|_{\omega_{\phi}}^2+|\overline{\nabla}_{\phi}U|_{\omega_{\phi}}^2)\\
&\hbox{}& \text{(because $\frac{256}{257}K < K-|X|_{\omega_{\phi}}^2<K$)}.
\end{eqnarray*}
Hence, combining with \eqref{eqh}, we get
\[
\left( \frac{d}{dt}-\Delta_{\omega_{\phi}} \right) W \leq \left( C_{10}+C_{11}+C_{13}+C_{14}-\frac{A}{N(N+1)} \right) S+C_{13}.
\]
Let $(x_0,t_0)$ be the maximum point of $W$ on $\overline{B_r(p)} \times [0,T]$. If $t_0=0$, then $S$ is bounded by the initial data $\|\phi(\cdot,0)\|_{C^3(B_r(p))}$. Moreover, we find that $W \equiv A {\rm tr}h$ on the boundary of $B_r(p)$ where the function ${\rm tr}h$ is uniformly controlled. Then we may assume that $t_0>0$ and $x_0$ does not lie in the boundary of $B_r(p)$. By the maximum principle, we have
\[
0 \leq \left( C_{10}+C_{11}+C_{13}+C_{14}-\frac{A}{N(N+1)} \right) S(x_0,t_0)+C_{13}.
\]
Taking $A:=N(N+1)(C_{10}+C_{11}+C_{13}+C_{14}+1)$, we conclude that $S(x_0,t_0) \leq C_{13}$. Since $0 \leq {\rm tr}h \leq nN$, we have
\[
S \leq \frac{257}{256} C_{13}+AnNK \leq C_{15}
\]
on $\overline{B_{r_1}(p)} \times [0,T]$, where the constant $C_{15}$ depends only on $N$, $\gamma$, $\omega_0$, $X$, $\|\phi(\cdot,0)\|_{C^3(B_r(p))}$ and $\|\widetilde{\eta}\|_{C^1(B_r(p))}$. In particular, $|\nabla_{\phi}X|_{\omega_{\phi}}^2$ is uniformly bounded.

Next, we establish the uniform bound of $|{\rm Rm}_{\phi}|_{\omega_{\phi}}^2$. The evolution equation of the full curvature tensor along MTKRF is
\begin{eqnarray} \label{evc}
\left( \frac{d}{dt}-\Delta_{\omega_{\phi}} \right)R_{\phi \bar{j}i\bar{l}k}&=&R_{\phi \bar{j}i}{}^{p \bar{q}}R_{\phi \bar{l} k \bar{q} p}+R_{\phi \bar{l}i}{}^{p \bar{q}}R_{\phi \bar{j} k \bar{q} p}-R_{\phi \bar{j}p\bar{l}} {}^{\bar{q}} R_{\phi}{}^p {}_{i \bar{q} k}-R_{\phi p \bar{l}}R_{\phi \bar{j}i} {}^p{}_k \nonumber \\
&\hbox{}&-R_{\phi \bar{j} h}R_{\phi}{}^h{}_{i \bar{l} k}-\nabla_{\phi \bar{l}} \nabla_{\phi k} \widetilde{\eta}_{i \bar{j}}+\gamma R_{\phi \bar{j} i \bar{l} k}-\widetilde{\eta}_{\bar{j} h} R_{\phi}{}^h{}_{ik\bar{l}} \nonumber \\
&\hbox{}&\underbrace{-\nabla_{\phi \bar{l}} \nabla_{\phi k} \nabla_{\phi i}X_{\bar{j}}-\nabla_{\phi h}X_{\bar{j}} \cdot R_{\phi}{}^h{}_{ik\bar{l}}}_{\text{additional terms arising from $X$}}.
\end{eqnarray}
By direct computations, we get
\begin{equation} \label{soe}
\nabla_{\phi \bar{l}} \nabla_{\phi k} \widetilde{\eta}_{i \bar{j}}=\nabla_{0 \bar{l}} \nabla_{0 k} \widetilde{\eta}_{i \bar{j}}-U_{\bar{l} \bar{j}}^{\bar{s}} \nabla_{0 k} \widetilde{\eta}_{i \bar{s}}-\nabla_{0 \bar{l}} U_{ki}^s \widetilde{\eta}_{s \bar{j}}-U_{ki}^s\nabla_{0 \bar{l}} \widetilde{\eta}_{s \bar{j}}+U_{ki}^sU_{\bar{l} \bar{j}}^{\bar{t}} \widetilde{\eta}_{s \bar{t}},
\end{equation}
\begin{equation} \label{utc}
\nabla_{0 \bar{k}}U_{jl}^i=\nabla_{\phi \bar{k}}U_{jl}^i=\partial_{\bar{k}}U_{jl}^i=-R_{\phi}{}^i{}_{l \bar{k} j}+R_0{}^i{}_{l \bar{k} j},
\end{equation}
\begin{equation} \label{sao}
\nabla_{\phi \bar{u}} \nabla_{\phi l} \nabla_{\phi j}X^i=-\nabla_{\phi l}X^k \cdot R_{\phi j}{}^i{}_{k \bar{u}}-X^k \nabla_{\phi l} R_{\phi j}{}^i{}_{k \bar{u}}-\nabla_{\phi j}X^p \cdot R_{\phi p} {}^i{}_{l \bar{u}}+\nabla_{\phi s}X^i \cdot R_{\phi l}{}^s{}_{j \bar{u}}.
\end{equation}
Hence, using the uniform bound of $S$, $|X|_{\omega_{\phi}}^2$ and $|\nabla_{\phi} X|_{\omega_{\phi}}^2$, we have
\begin{eqnarray*}
\left| \left( \frac{d}{dt}-\Delta_{\omega_{\phi}} \right){\rm Rm}_{\phi} \right|_{\omega_{\phi}} \leq C_{16}(|{\rm Rm}_{\phi}|_{\omega_{\phi}}^2+|{\rm Rm}_{\phi}|_{\omega_{\phi}}+1)+C_{17}|\nabla_{\phi} {\rm Rm}_{\phi}|_{\omega_{\phi}}.
\end{eqnarray*}
Thus, by the uniform bound of $|\nabla_{\phi} X|_{\omega_{\phi}}^2$ and the equation \eqref{sMAc}, we obtain
\begin{eqnarray} \label{erf}
\left( \frac{d}{dt}-\Delta_{\omega_{\phi}} \right) |{\rm Rm}_{\phi}|_{\omega_{\phi}}^2 &\leq& C_{18}(|{\rm Rm}_{\phi}|_{\omega_{\phi}}^3+|{\rm Rm}_{\phi}|_{\omega_{\phi}}^2)+2\left| \left( \frac{d}{dt}-\Delta_{\omega_{\phi}} \right){\rm Rm}_{\phi} \right|_{\omega_{\phi}} |{\rm Rm}_{\phi}|_{\omega_{\phi}} \nonumber \\
&\hbox{}& -|\nabla_{\phi} {\rm Rm}_{\phi}|_{\omega_{\phi}}^2-|\overline{\nabla}_{\phi} {\rm Rm}_{\phi}|_{\omega_{\phi}}^2 \nonumber \\
&\leq& C_{19}(|{\rm Rm}_{\phi}|_{\omega_{\phi}}^3+1)-\frac{1}{2}|\nabla_{\phi} {\rm Rm}_{\phi}|_{\omega_{\phi}}^2-|\overline{\nabla}_{\phi} {\rm Rm}_{\phi}|_{\omega_{\phi}}^2.
\end{eqnarray}
Now we take a smaller radius $r_2$ satisfying $r_1>r_2>r/2$ and show that $|{\rm Rm}_{\phi}|_{\omega_{\phi}}^2$ is uniformly bounded on $\overline{B_{r_2}(p)}$. Let $\mu$ be a nonnegative smooth cut-off function that is identically equal to $1$ on $\overline{B_{r_2}(p)}$, vanishes on the outside of $B_{r_1}(p)$ and satisfies
\[
|\partial \mu|_{\omega_0}, \;\; |\sqrt{-1}\partial \bar{\partial} \mu |_{\omega_0} \leq C_{20}.
\]
Let $L$ be a uniform constant satisfying $\frac{512}{513}L \leq L-S \leq L$. We consider the function
\[
G:=\mu^2 \frac{|{\rm Rm}_{\phi}|_{\omega_{\phi}}^2}{L-S}+BS,
\]
where $B$ is a uniform constant determined later. By computing, we have
\begin{eqnarray*}
\left( \frac{d}{dt}-\Delta_{\omega_{\phi}} \right)G&=&(-\Delta_{\omega_{\phi}} \mu^2) \frac{|{\rm Rm}_{\phi}|_{\omega_{\phi}}^2}{L-S}-4{\rm Re} \left( \frac{\mu \nabla_{\phi} \mu}{L-S}, \nabla_{\phi}|{\rm Rm}_{\phi}|_{\omega_{\phi}}^2 \right)_{\omega_{\phi}}\\
&\hbox{}&-4{\rm Re} \left( \mu \nabla_{\phi} \mu, \frac{|{\rm Rm}_{\phi}|_{\omega_{\phi}}^2\nabla_{\phi}S}{(L-S)^2} \right)_{\omega_{\phi}}+\frac{\mu^2}{L-S}\left( \frac{d}{dt}-\Delta_{\omega_{\phi}} \right)|{\rm Rm}_{\phi}|_{\omega_{\phi}}^2\\
&\hbox{}&+\frac{\mu^2|{\rm Rm}_{\phi}|_{\omega_{\phi}}^2}{(L-S)^2} \left( \frac{d}{dt}-\Delta_{\omega_{\phi}} \right) S -\frac{2\mu^2 |{\rm Rm}_{\phi}|_{\omega_{\phi}}^2}{(L-S)^3}|\nabla_{\phi}S|_{\omega_{\phi}}^2\\
&\hbox{}&-2{\rm Re} \left( \mu^2 \frac{\nabla_{\phi}S}{(L-S)^2}, \nabla_{\phi}|{\rm Rm}_{\phi}|_{\omega_{\phi}}^2 \right)_{\omega_{\phi}}+B \left( \frac{d}{dt}-\Delta_{\omega_{\phi}} \right) S.
\end{eqnarray*}
Then, by \eqref{eoS}, \eqref{aes}, \eqref{erf} and
\begin{equation}
|\nabla_{\phi} |{\rm Rm}_{\phi}|_{\omega_{\phi}}^2|_{\omega_{\phi}} \leq |{\rm Rm}_{\phi}|_{\omega_{\phi}} (|\nabla_{\phi}{\rm Rm}_{\phi}|_{\omega_{\phi}}+|\overline{\nabla}_{\phi}{\rm Rm}_{\phi}|_{\omega_{\phi}}),
\end{equation}

 we know that
\[
\left| (-\Delta_{\omega_{\phi}} \mu^2) \frac{|{\rm Rm}_{\phi}|_{\omega_{\phi}}^2}{L-S} \right| \leq C_{21} |{\rm Rm}_{\phi}|_{\omega_{\phi}}^2,
\]
\begin{eqnarray*}
\left| 4{\rm Re} \left( \frac{\mu \nabla_{\phi} \mu}{L-S}, \nabla_{\phi}|{\rm Rm}_{\phi}|_{\omega_{\phi}}^2 \right)_{\omega_{\phi}} \right| &\leq& \frac{4}{L-S}\mu |\nabla_{\phi} \mu|_{\omega_{\phi}}|{\rm Rm}|_{\omega_{\phi}}(|\nabla_{\phi} {\rm Rm}_{\phi}|_{\omega_{\phi}}+|\overline{\nabla}_{\phi} {\rm Rm}_{\phi}|_{\omega_{\phi}})\\
&\leq&C_{22}|{\rm Rm}_{\phi}|_{\omega_{\phi}}^2+\frac{\mu^2}{4(L-S)}(|\nabla_{\phi} {\rm Rm}_{\phi}|_{\omega_{\phi}}^2+|\overline{\nabla}_{\phi} {\rm Rm}_{\phi}|_{\omega_{\phi}}^2),
\end{eqnarray*}
\begin{eqnarray*}
\left| 4{\rm Re} \left( \mu \nabla_{\phi} \mu, \frac{|{\rm Rm}_{\phi}|_{\omega_{\phi}}^2\nabla_{\phi}S}{(L-S)^2} \right)_{\omega_{\phi}} \right| &\leq& \frac{4 \sqrt{2} |{\rm Rm}_{\phi}|_{\omega_{\phi}}^2}{(L-S)^2} \mu |\nabla_{\phi} \mu|_{\omega_{\phi}} S^{1/2} (|\nabla_{\phi}U|_{\omega_{\phi}}^2+|\overline{\nabla}_{\phi}U|_{\omega_{\phi}}^2)^{1/2}\\
&\leq& C_{23}|{\rm Rm}_{\phi}|_{\omega_{\phi}}^2+\frac{\mu^2 |{\rm Rm}_{\phi}|_{\omega_{\phi}}^2}{4(L-S)^2}(|\nabla_{\phi}U|_{\omega_{\phi}}^2+|\overline{\nabla}_{\phi}U|_{\omega_{\phi}}^2),
\end{eqnarray*}
\begin{eqnarray*}
\frac{\mu^2}{L-S} \left( \frac{d}{dt}-\Delta_{\omega_{\phi}} \right)|{\rm Rm}_{\phi}|_{\omega_{\phi}}^2 &\leq& \frac{C_{19}\mu^2}{L-S}|{\rm Rm}_{\phi}|_{\omega_{\phi}}^3-\frac{\mu^2}{2(L-S)}(|\nabla_{\phi} {\rm Rm}_{\phi}|_{\omega_{\phi}}^2+|\overline{\nabla}_{\phi} {\rm Rm}_{\phi}|_{\omega_{\phi}}^2)+C_{24}\\
&\leq& \frac{\mu^2|{\rm Rm_{\phi}}|_{\omega_{\phi}}^4}{8(L-S)^2}+C_{25} \mu^2 |{\rm Rm_{\phi}}|_{\omega_{\phi}}^2-\frac{\mu^2}{2(L-S)}(|\nabla_{\phi} {\rm Rm}_{\phi}|_{\omega_{\phi}}^2+|\overline{\nabla}_{\phi} {\rm Rm}_{\phi}|_{\omega_{\phi}}^2)\\
&\hbox{}&+C_{24}\\
&\leq& C_{26}|{\rm Rm}_{\phi}|_{\omega_{\phi}}^2+\frac{\mu^2|{\rm Rm}_{\phi}|_{\omega_{\phi}}^2}{8(L-S)^2}(|\nabla_{\phi}U|_{\omega_{\phi}}^2+|\overline{\nabla}_{\phi}U|_{\omega_{\phi}}^2)\\
&\hbox{}& -\frac{\mu^2}{2(L-S)}(|\nabla_{\phi} {\rm Rm}_{\phi}|_{\omega_{\phi}}^2+|\overline{\nabla}_{\phi} {\rm Rm}_{\phi}|_{\omega_{\phi}}^2)+C_{24}\\
&\hbox{}& \text{(where we used \eqref{utc} in the last inequality)},
\end{eqnarray*}
\[
\frac{\mu^2|{\rm Rm}_{\phi}|_{\omega_{\phi}}^2}{(L-S)^2} \left( \frac{d}{dt}-\Delta_{\omega_{\phi}} \right) S \leq C_{27}|{\rm Rm}_{\phi}|_{\omega_{\phi}}^2-\frac{\mu^2|{\rm Rm}_{\phi}|_{\omega_{\phi}}^2}{2(L-S)^2}(|\nabla_{\phi}U|_{\omega_{\phi}}^2+|\overline{\nabla}_{\phi}U|_{\omega_{\phi}}^2),
\]
\begin{eqnarray*}
\left| 2{\rm Re} \left( \mu^2 \frac{\nabla_{\phi}S}{(L-S)^2}, \nabla_{\phi}|{\rm Rm}_{\phi}|_{\omega_{\phi}}^2 \right)_{\omega_{\phi}} \right| &\leq& \frac{2 \sqrt{2} \mu^2}{(L-S)^2} S^{1/2}(|\nabla_{\phi}U|_{\omega_{\phi}}^2+|\overline{\nabla}_{\phi}U| _{\omega_{\phi}}^2)^{1/2} \cdot \\
&\hbox{}& |{\rm Rm}_{\phi}|_{\omega_{\phi}}(|\nabla_{\phi} {\rm Rm}_{\phi}|_{\omega_{\phi}}+|\overline{\nabla}_{\phi} {\rm Rm}_{\phi}|_{\omega_{\phi}})\\
&\leq& \frac{64\mu^2S}{(L-S)^2}(|\nabla_{\phi} {\rm Rm}_{\phi}|_{\omega_{\phi}}^2+|\overline{\nabla}_{\phi} {\rm Rm}_{\phi}|_{\omega_{\phi}}^2)\\
&\hbox{}&+\frac{\mu^2|{\rm Rm}_{\phi}|_{\omega_{\phi}}^2}{16(L-S)^2}(|\nabla_{\phi}U|_{\omega_{\phi}}^2+|\overline{\nabla}_{\phi}U| _{\omega_{\phi}}^2)\\
&\leq& \frac{\mu^2}{8(L-S)}(|\nabla_{\phi} {\rm Rm}_{\phi}|_{\omega_{\phi}}^2+|\overline{\nabla}_{\phi} {\rm Rm}_{\phi}|_{\omega_{\phi}}^2)\\
&\hbox{}&+\frac{\mu^2|{\rm Rm}_{\phi}|_{\omega_{\phi}}^2}{16(L-S)^2}(|\nabla_{\phi}U|_{\omega_{\phi}}^2+|\overline{\nabla}_{\phi}U| _{\omega_{\phi}}^2)\\
&\hbox{}& \text{(because $\frac{512}{513}L < L-S<L$)}.
\end{eqnarray*}
As in the previous part, we may only consider an inner point $(x_0,t_0)$ which is a maximum point of $G$ achieved on $\overline{B_{r_1}(p)} \times [0,T]$. By the maximum principle, we have
\[
0 \leq \left( C_{21}+C_{22}+C_{23}+C_{26}+C_{27}-\frac{B}{2} \right)|{\rm Rm}_{\phi}|_{\omega_{\phi}}^2(x_0,t_0)+C_{28}.
\]
Now we set $B:=2(C_{21}+C_{22}+C_{23}+C_{26}+C_{27}+1)$. Then we obtain
\[
|{\rm Rm}_{\phi}|_{\omega_{\phi}}^2(x_0,t_0) \leq C_{28}.
\]
Since $S$ is uniformly bounded, this implies
\[
|{\rm Rm}_{\phi}|_{\omega_{\phi}}^2 \leq C_{29}
\]
on $\overline{B_{r_2}(p)} \times [0,T]$, where $C_{29}$ depends only on $N$, $\gamma$, $\omega_0$, $X$, $\|\phi(\cdot,0)\|_{C^4(B_r(p))}$ and $\| \widetilde{\eta} \|_{C^2(B_r(p))}$.

Following \cite{LZ17}, we say that $\phi$ is $C^{k,\alpha}$ if its $C^{k,\alpha}$ norm can be controlled by a constant depending only on $N$, $\gamma$, $\omega_0$, $X$, $\|\phi(\cdot,0)\|_{C^{k+1}(B_r(p))}$, $\|\phi\|_{C^0(B_r(p)\times[0,T])}$, $\|\widetilde{\eta}\|_{C^{k-1}(B_r(p))}$ and $\|F\|_{C^0(B_r(p))}$. Likewise, we say that $\dot{\phi}$ is $C^{k,\alpha}$ if its $C^{k,\alpha}$ norm can be controlled by a constant depending only on $N$, $\gamma$, $\omega_0$, $X$, $\|\phi(\cdot,0)\|_{C^{k+3}(B_r(p))}$, $\|\phi\|_{C^0(B_r(p)\times[0,T])}$, $\|\widetilde{\eta}\|_{C^{k+1}(B_r(p))}$ and $\|F\|_{C^0(B_r(p))}$. Since $|{\rm Rm}_{\phi}|_{\omega_{\phi}}^2$ and $|\nabla_{\phi}X|_{\omega_{\phi}}^2$ are uniformly bounded, we know that $\dot{\phi}$ is $C^{1,\alpha}$. Differentiating the equation \eqref{MAf} with respect to $z^k$, we get
\[
\frac{d}{dt}\frac{\partial \phi}{\partial z^k}=(\Delta_{\omega_{\phi}}+X)\frac{\partial \phi}{\partial z^k}+g_{\phi}^{i\bar{j}}\frac{\partial g_{0 i \bar{j}}}{\partial z^k}-g_0^{i \bar{j}}\frac{g_{0 i \bar{j}}}{\partial z^k}+\frac{\partial F}{\partial z^k}+\gamma \frac{\partial \phi}{\partial z^k}+\frac{\partial \theta_X}{\partial z^k}+\frac{\partial X^i}{\partial z^k} \frac{\partial \phi}{\partial z^i}.
\]
From the above Calabi's $C^3$-estimate, we know that $\phi$ is $C^{2,\alpha}$ and then the coefficients of $\Delta_{\omega_{\phi}}$ are $C^{0,\alpha}$. Since $F$ is the twisted Ricci potential, taking the trace with respect to $\omega_0$ yields
\[
\Delta_{\omega_0}F=-{\rm tr}_{\omega_0}{\rm Ric}(\omega_0)+\gamma+{\rm tr}_{\omega_0}\widetilde{\eta}.
\]
Hence the $C^{1,\alpha}$-norm of $F$ on $B_{r_2}(p)$ only depends on $\omega_0$, $\|\widetilde{\eta}\|_{C^0(B_r(p))}$ and $\|F\|_{C^0(B_r(p))}$. By the standard elliptic Schauder estimates, we conclude that $\phi$ is $C^{3,\alpha}$ on $B_{r_3}(p) \times [0,T]$, where $r_2>r_3>r/2$.

Now we prove that $|\nabla_{\phi} {\rm Rm}_{\phi}|_{\omega_{\phi}}^2$ is uniformly bounded. First we compute the evolution equation of $U$ as
\begin{equation} \label{eut}
\left( \frac{d}{dt}-\Delta_{\omega_{\phi}} \right) U_{ml}^{\beta}=\nabla_{\phi m}(\widetilde{\eta}^{\beta}{}_l+\nabla_{\phi l}X^{\beta})-\nabla_{\phi}^{\bar{q}}R_0{}^{\beta}{}_{l \bar{q}m}.
\end{equation}
Since $\widetilde{\eta}$, ${\rm Rm}_0$ and $X$ are $t$-independent tensors, we know that
\begin{equation} \label{uce}
|\nabla_{\phi} \widetilde{\eta}|_{\omega_{\phi}} \leq C_{30},
\end{equation}
\begin{equation} \label{ucv}
|\nabla_{\phi}^2 \widetilde{\eta}|_{\omega_{\phi}}+|\nabla_{\phi}^2 {\rm Rm}_0|_{\omega_{\phi}}+|\nabla_{\phi}^2 X|_{\omega_{\phi}} \leq C_{31}(1+|\nabla_{\phi} U|_{\omega_{\phi}}),
\end{equation}
\[
|\nabla_{\phi}^3X|_{\omega_{\phi}} \leq C_{32}(1+|\nabla_{\phi}U|_{\omega_{\phi}}+|\nabla_{\phi}^2U|_{\omega_{\phi}}).
\]
On the other hand, by the Ricci identity, we have
\[
\left( \frac{d}{dt}-\Delta_{\omega_{\phi}} \right) \nabla_{\phi} U =\nabla_{\phi} \left( \frac{d}{dt}-\Delta_{\omega_{\phi}} \right) U+U \ast \nabla_{\phi}({\rm Rm}_{\phi}+\widetilde{\eta}+\nabla_{\phi} X)+{\rm Rm}_{\phi} \ast \nabla_{\phi}U,
\]
where $\ast$ means the general pairs of tensors. Thus we obtain
\begin{equation} \label{eou}
\left( \frac{d}{dt}-\Delta_{\omega_{\phi}} \right) |\nabla_{\phi}U|_{\omega_{\phi}}^2 \leq C_{33}(|\nabla_{\phi}U|_{\omega_{\phi}}^2+1)+|\nabla_{\phi} {\rm Rm}_{\phi}|_{\omega_{\phi}}^2-\frac{1}{2}|\nabla_{\phi} \nabla_{\phi} U|_{\omega_{\phi}}^2-|\overline{\nabla}_{\phi} \nabla_{\phi}U|_{\omega_{\phi}}^2.
\end{equation}
Now we set $r_3 >r'_3>r/2$ and take a smooth cut-off function $\varrho$ such that
\[
|\partial \varrho|_{\omega_0}, \;\; |\sqrt{-1}\partial \bar{\partial} \varrho |_{\omega_0} \leq C_{34},
\]
and set
\[
I:=\varrho^2 |\nabla_{\phi} U|_{\omega_{\phi}}^2+ES+2|{\rm Rm}_{\phi}|_{\omega_{\phi}}^2,
\]
where $E$ is a uniform constant determined later. Then we see that
\begin{eqnarray*}
\left( \frac{d}{dt}-\Delta_{\omega_{\phi}} \right) I &\leq& (-\Delta_{\omega_{\phi}} \varrho^2)|\nabla_{\phi} U|_{\omega_{\phi}}^2-4{\rm Re}(\varrho \nabla_{\phi} \varrho, \nabla_{\phi}|\nabla_{\phi}U|_{\omega_{\phi}}^2)_{\omega_{\phi}}\\
&\hbox{}&+\varrho^2 \left( \frac{d}{dt}-\Delta_{\omega_{\phi}} \right) |\nabla_{\phi}U|_{\omega_{\phi}}^2+E \left( \frac{d}{dt}-\Delta_{\omega_{\phi}} \right) S+2 \left( \frac{d}{dt}-\Delta_{\omega_{\phi}} \right) |{\rm Rm}_{\phi}|_{\omega_{\phi}}^2.
\end{eqnarray*}
The first and second term of the RHS are estimated as
\[
|(-\Delta_{\omega_{\phi}} \varrho^2)|\nabla_{\phi} U|_{\omega_{\phi}}^2| \leq C_{35} |\nabla_{\phi} U|_{\omega_{\phi}}^2,
\]
\[
|4{\rm Re}(\varrho \nabla_{\phi} \varrho, \nabla_{\phi}|\nabla_{\phi}U|_{\omega_{\phi}}^2)_{\omega_{\phi}}| \leq C_{36}|\nabla_{\phi}U|_{\omega_{\phi}}^2+\frac{\varrho^2}{4}(|\nabla_{\phi}\nabla_{\phi}U|_{\omega_{\phi}}^2+|\overline{\nabla}_{\phi}\nabla_{\phi}U|_{\omega_{\phi}}^2).
\]
Thus, combining with \eqref{eoS} and \eqref{erf}, we obtain
\[
\left( \frac{d}{dt}-\Delta_{\omega_{\phi}} \right) I \leq \left( C_{33}+C_{35}+C_{36}-\frac{E}{2} \right) |\nabla_{\phi}U|_{\omega_{\phi}}^2+C_{37}.
\]
Hence, if we set $E:=2(C_{33}+C_{35}+C_{36}+1)$, the maximum principle implies the uniform bound of $|\nabla_{\phi}U|_{\omega_{\phi}}^2$ on $\overline{B_{r'_3}(p)} \times [0,T]$. Let $D$ denote the real covariant derivative with respect to $\omega_{\phi}$ (extended linearly on the space of complex tensors). Combining with the uniform bound of $|{\rm Rm}_{\phi}|_{\omega_{\phi}}^2$ and \eqref{utc}, we have
\[
|DU|_{\omega_{\phi}}^2 \leq C_{38}
\]
on $\overline{B_{r'_3}(p)} \times [0,T]$, where the constant $C_{38}$ depends only on $N$, $\gamma$, $\omega_0$, $X$, $\| \phi(\cdot, 0)\|_{C^4(B_r(P))}$ and $\| \widetilde{\eta} \|_{C^2(B_r(p))}$. In particular, we find that $|D^2X|_{\omega_{\phi}}^2$ is uniformly bounded. Applying $\nabla_{\phi}$ to \eqref{evc}, we see that
\[
\left| \nabla_{\phi} \left( \frac{d}{dt}-\Delta_{\omega_{\phi}} \right) {\rm Rm}_{\phi} \right|_{\omega_{\phi}} \leq C_{39}(|\nabla_{\phi}{\rm Rm}_{\phi}|_{\omega_{\phi}}+|\nabla_{\phi} \overline{\nabla}_{\phi}\nabla_{\phi} \widetilde{\eta}|_{\omega_{\phi}}+|\nabla_{\phi}\overline{\nabla}_{\phi}\nabla_{\phi}^2 X|_{\omega_{\phi}}).
\]
Applying $\nabla_{\phi}$ to \eqref{soe} and \eqref{sao}, and using the uniform bound of $|DU|_{\omega_{\phi}}^2$, we have
\[
|\nabla_{\phi} \overline{\nabla}_{\phi} \nabla_{\phi} \widetilde{\eta}|_{\omega_{\phi}} \leq C_{40}(1+|\nabla_{\phi} {\rm Rm}_{\phi}|_{\omega_{\phi}}),
\]
\[
|\nabla_{\phi} \overline{\nabla}_{\phi} \nabla_{\phi}^2 X|_{\omega_{\phi}} \leq C_{41}(1+|\nabla_{\phi} {\rm Rm}_{\phi}|_{\omega_{\phi}}+|\nabla_{\phi}^2 {\rm Rm}_{\phi}|_{\omega_{\phi}}).
\]
Combining with
\[
\left( \frac{d}{dt}-\Delta_{\omega_{\phi}} \right) \nabla_{\phi}{\rm Rm}_{\phi} =\nabla_{\phi} \left(\frac{d}{dt}-\Delta_{\omega_{\phi}} \right) {\rm Rm}_{\phi}+{\rm Rm}_{\phi} \ast \nabla_{\phi} ({\rm Rm}_{\phi}+\widetilde{\eta}+\nabla_{\phi}X),
\]
we find that
\[
\left( \frac{d}{dt}-\Delta_{\omega_{\phi}} \right) |\nabla_{\phi} {\rm Rm}_{\phi}|_{\omega_{\phi}}^2 \leq C_{42}(|\nabla_{\phi} {\rm Rm}_{\phi}|_{\omega_{\phi}}^2+1)-\frac{1}{2}|\nabla_{\phi} \nabla_{\phi} {\rm Rm}_{\phi}|_{\omega_{\phi}}^2-|\overline{\nabla}_{\phi} \nabla_{\phi} {\rm Rm}_{\phi}|_{\omega_{\phi}}^2.
\]
Now we take a smaller radius $r'_3>r''_3>r/2$ and a smooth cut-off function $\sigma$ that is identically equal to $1$ on $\overline{B_{r''_3}(p)}$, vanishes on the outside of $B_{r'_3}(p)$ and satisfies
\[
|\partial \sigma|_{\omega_0}, \;\; |\sqrt{-1}\partial \bar{\partial} \sigma |_{\omega_0} \leq C_{43}.
\]
We apply the maximum principle to the function $\sigma^2 |\nabla_{\phi} {\rm Rm}_{\phi}|_{\omega_{\phi}}^2+P|{\rm Rm}_{\phi}|_{\omega_{\phi}}^2$ (where $P$ is a suitable uniform constant). Then, as in the previous argument, we find that $|\nabla_{\phi} {\rm Rm}_{\phi}|_{\omega_{\phi}}^2$ is uniformly bounded on $\overline{B_{r''_3}(p)} \times [0,T]$. Thus we have
\[
|D {\rm Rm}_{\phi}|_{\omega_{\phi}}^2 \leq C_{44}
\]
on $\overline{B_{r''_3}(p)} \times [0,T]$, where $C_{44}$ depends only on $N$, $\gamma$, $\omega_0$, $X$, $\|\phi(\cdot,0)\|_{C^5(B_r(p))}$ and $\| \widetilde{\eta} \|_{C^3(B_r(p))}$.

Applying $D$ to the equation \eqref{sMAc}, we have
\[
D \sqrt{-1} \partial \bar{\partial} \dot{\phi} =D {\rm Ric}(\omega_{\phi})+D \widetilde{\eta}+D(\nabla_{\phi} X^{\flat}),
\]
where $X^{\flat}_{\bar{j}}:=g_{\phi i \bar{j}} X^i$. Taking the trace, we have
\begin{eqnarray*}
|\Delta_{\omega_{\phi}}D \dot{\phi}|_{\omega_{\phi}} &\leq& |D \Delta_{\omega_{\phi}} \dot{\phi} |_{\omega_{\phi}}+|D {\rm Rm}_{\phi} \ast \dot{\phi}|_{\omega_{\phi}}+|{\rm Rm}_{\phi} \ast D \dot{\phi}|_{\omega_{\phi}}\\
&\leq& C_{45}(|D{\rm Rm}_{\phi}|_{\omega_{\phi}}+|D \widetilde{\eta}|_{\omega_{\phi}}+|D^2 X|_{\omega_{\phi}}+|D{\rm Rm}_{\phi}|_{\omega_{\phi}}|\dot{\phi}|+|{\rm Rm}_{\phi}|_{\omega_{\phi}} |D \dot{\phi}|_{\omega_{\phi}})
\end{eqnarray*}
From the above computations and the fact that $\dot{\phi}$ is $C^{1,\alpha}$, we find that $D \dot{\phi}$ is $C^{1,\alpha}$, which implies that $\dot{\phi}$ is $C^{2,\alpha}$. Differentiating the equation \eqref{MAf} two times and using the elliptic Schauder estimates, we have $\phi$ is $C^{4,\alpha}$ on $B_{r_4}(p) \times [0,T]$, where $r''_3>r_4>r/2$.

Now we establish the $C^{k,\alpha}$-estimate for $\phi$. For this, we set the following induction hypothesis:
\[
\begin{cases}
|D^j {\rm Rm}|_{\omega_{\phi}}^2 \leq C_j^1 \\
\text{$\dot{\phi}$ is $C^{j+1,\alpha}$} \\
\text{$\phi$ is $C^{j+3,\alpha}$}
\end{cases}
\text{on $\overline{B_{r_{j+3}}(p)} \times [0,T]$ for all $j=0,1, \ldots k$}, \leqno{(H_k)}
\]
where $r>r_1>\cdots>r_{k+2}>r_{k+3}>r/2$ and the constant $C_j^1$ depends only on $N$, $\gamma$, $\omega_0$, $X$, $\|\phi(\cdot,0)\|_{C^{j+4}(B_r(p))}$, $\|\phi \|_{C^0(B_r(p) \times [0,T])}$, $\|\widetilde{\eta}\|_{C^{j+2}(B_r(p))}$ and $\|F\|_{C^0(B_r(p))}$. We have already seen that this statement is established for $k=0,1$. Now we assume that the induction hypothesis $(H_k)$ holds for some $k \geq 1$. Since $\phi$ is $C^{k+3, \alpha}$, we observe that
\[
|D^j U|_{\omega_{\phi}}^2 \leq C_{46} \;\; \text{for $j=0,1,\ldots,k$}.
\]
In particular, for any $t$-independent tensor $A$, we find that $|D^jA|_{\omega_{\phi}}^2$ is uniformly bounded for $j=0,1,\ldots,k+1$.
We first show the uniform bound of $|D^{k+1}U|_{\omega_{\phi}}^2$. Let $r,s$ ($r+s=k+1$) are non-negative integers. Then any $(k+1)$-detivative of $U$ differs from $\nabla_{\phi}^r \overline{\nabla}_{\phi}^s U$ by a linear combination of $D^iU \ast D^{r+s-2-i}{\rm Rm}_{\phi}$ ($0 \leq i \leq r+s-2$), which has been already estimated by the induction hypothesis $(H_k)$. Thus we may only consider $\nabla_{\phi}^r \overline{\nabla}_{\phi}^s U$. Moreover, the equation \eqref{utc} and $(H_k)$ indicate that we should only consider $\nabla_{\phi}^{k+1} U$. Using the Ricci identity repeatedly, we have
\begin{eqnarray*}
\left( \frac{d}{dt}-\Delta_{\omega_{\phi}} \right) \nabla_{\phi}^{k+1}U&=&\underbrace{\nabla_{\phi}^{k+1} \left( \frac{d}{dt}-\Delta_{\omega_{\phi}} \right) U}_\text{($\nabla^{k+1}U$;I)}+\underbrace{\sum_{\substack{p \geq 0, q \geq 1\\ p+q=k+1}} \nabla_{\phi}^p U \ast \nabla_{\phi}^q ({\rm Rm}_{\phi}+\widetilde{\eta}+\nabla_{\phi}X)}_\text{($\nabla^{k+1}U$;II)}\\
&+& \underbrace{\sum_{\substack{p \geq 0, q \geq 1\\ p+q=k+1}} \nabla_{\phi}^p{\rm Rm}_{\phi} \ast \nabla_{\phi}^q U}_\text{($\nabla^{k+1}U$;III)}.
\end{eqnarray*}
By \eqref{eut} and $(H_k)$, we observe that
\[
|\text{($\nabla^{k+1}U$;I)}|_{\omega_{\phi}} \leq C_{47}(1+|\nabla_{\phi}^{k+1}U|_{\omega_{\phi}}+|\nabla_{\phi}^{k+2}U|_{\omega_{\phi}}),
\]
\[
|\text{($\nabla^{k+1}U$;II)}|_{\omega_{\phi}} \leq C_{48}(1+|\nabla_{\phi}^{k+1}{\rm Rm}_{\phi}|_{\omega_{\phi}}+|\nabla_{\phi}^{k+1}U|_{\omega_{\phi}}),
\]
\[
|\text{($\nabla^{k+1}U$;III)}|_{\omega_{\phi}} \leq C_{49}(1+|\nabla_{\phi}^{k+1}{\rm Rm}_{\phi}|_{\omega_{\phi}}).
\]
Thus the evolution equation of $|\nabla_{\phi}^{k+1}U|_{\omega_{\phi}}^2$ can be estimated as
\begin{equation} \label{huf}
\left( \frac{d}{dt}-\Delta_{\omega_{\phi}} \right)|\nabla_{\phi}^{k+1}U|_{\omega_{\phi}}^2 \leq -\frac{1}{2}|\nabla_{\phi}^{k+2}U|_{\omega_{\phi}}^2-|\overline{\nabla}_{\phi}\nabla_{\phi}^{k+1}U|_{\omega_{\phi}}^2+C_{50}|\nabla_{\phi}^{k+1}U|_{\omega_{\phi}}^2+|\nabla_{\phi}^{k+1}{\rm Rm}_{\phi}|_{\omega_{\phi}}^2.
\end{equation}
Hence we should compute the evolution equation of $|\nabla_{\phi}^kU|_{\omega_{\phi}}^2$ and $|\nabla_{\phi}^k{\rm Rm}_{\phi}|_{\omega_{\phi}}^2$, and add them to the above equation. It is not hard to see that
\begin{equation}
\left( \frac{d}{dt}-\Delta_{\omega_{\phi}} \right) |\nabla_{\phi}^k U|_{\omega_{\phi}}^2 \leq C_{51} -\frac{1}{2}|\nabla_{\phi}^{k+1}U|_{\omega_{\phi}}^2-|\overline{\nabla}_{\phi}\nabla_{\phi}^k U|_{\omega_{\phi}}^2,
\end{equation}
\begin{equation} \label{emc}
\left( \frac{d}{dt}-\Delta_{\omega_{\phi}} \right) |\nabla_{\phi}^k{\rm Rm}_{\phi}|_{\omega_{\phi}}^2 \leq C_{52}-\frac{1}{2}|\nabla_{\phi}^{k+1}{\rm Rm}_{\phi}|_{\omega_{\phi}}^2-|\overline{\nabla}_{\phi} \nabla_{\phi}^k{\rm Rm}_{\phi}|_{\omega_{\phi}}^2.
\end{equation}
Actually, we can compute the first item in the same way as \eqref{huf}. For the second item, one should refer to the computation of \eqref{epc}.
Hence we take a smooth cut-off function $\varsigma$ and apply the maximum principle to the function $\varsigma^2|\nabla_{\phi}^{k+1} U|_{\omega_{\phi}}^2+Q|\nabla_{\phi}^k U|_{\omega_{\phi}}^2+2|\nabla_{\phi}^k{\rm Rm}_{\phi}|_{\omega_{\phi}}^2$ (for a suitable uniform constant $Q$) to get the uniform control of $|\nabla_{\phi}^{k+1} U|_{\omega_{\phi}}^2$ in $\overline{B_{r'_{j+3}}(p)} \times [0,T]$ with a smaller radius $r_{k+3}>r'_{k+3}>r/2$. Thus we have
\[
|D^{k+1} U|_{\omega_{\phi}}^2 \leq C_{53}
\]
on $\overline{B_{r'_{j+3}}(p)} \times [0,T]$, where the constant $C_{53}$ depends only on $N$, $\gamma$, $\omega_0$, $X$, $\|\phi(\cdot, 0)\|_{C^{k+4}(B_r (p))}$, $\|\phi\|_{C^0(B_r(p) \times [0,T])}$, $\|\widetilde{\eta}\|_{C^{k+2}(B_r(p))}$ and $\|F\|_{C^0(B_r(p))}$. In particular, we find that $|D^{k+2}X|_{\omega_{\phi}}^2$ is uniformly bounded.

Next, we establish the uniform estimate for $|D^{k+1} {\rm Rm}_{\phi}|_{\omega_{\phi}}^2$. As in the previous case, we may only consider the tensor of the form $\nabla_{\phi}^r \overline{\nabla}_{\phi}^s {\rm Rm}_{\phi}$ for non-negative integers $r$, $s$ such that $r+s=k+1$. Moreover, by the symmetries of ${\rm Rm}_{\phi}$, we may also assume that $r \neq 0$.

\vspace{4mm}
\noindent
\textbf{Case 1:} $r, s \neq 0$.

Using the Ricci identity repeatedly, we have
\begin{eqnarray*}
\left( \frac{d}{dt}-\Delta_{\omega_{\phi}} \right) \nabla_{\phi}^r \overline{\nabla}_{\phi}^s {\rm Rm}_{\phi}&=&\underbrace{\nabla_{\phi}^r \overline{\nabla}_{\phi}^s {\rm Rm}_{\phi} \left( \frac{d}{dt}-\Delta_{\omega_{\phi}} \right) {\rm Rm}_{\phi}}_\text{($\nabla^r\overline{\nabla}^s{\rm Rm}$;I)}\\
&\hbox{}&+\underbrace{\sum_{\substack{p \geq 0, q \geq 1\\ p+q=k+1}} \nabla_{\phi}^p \overline{\nabla}_{\phi}^s{\rm Rm}_{\phi} \ast \nabla_{\phi}^q({\rm Rm}_{\phi}+\widetilde{\eta}+\nabla_{\phi}X)}_\text{($\nabla^r\overline{\nabla}^s{\rm Rm}$;II)}\\
&\hbox{}&+\underbrace{\sum_{\substack{p \geq 0, q \geq 1\\ p+q=k+1}} \nabla_{\phi}^p {\rm Rm}_{\phi} \ast \nabla_{\phi}^q \overline{\nabla}_{\phi}^s({\rm Rm}_{\phi}+\widetilde{\eta}+\nabla_{\phi}X)}_\text{($\nabla^r\overline{\nabla}^s{\rm Rm}$;III)}\\
&\hbox{}&+\underbrace{\sum_{\substack{p \geq 0, q \geq 1\\ p+q=k+1\\ i=0,1,\ldots,r}} \nabla_{\phi}^i \overline{\nabla}_{\phi}^p {\rm Rm}_{\phi} \ast \nabla_{\phi}^{r-i} \overline{\nabla}_{\phi}^q ({\rm Rm}_{\phi}+\widetilde{\eta}+\nabla_{\phi}X)}_\text{($\nabla^r\overline{\nabla}^s{\rm Rm}$;IV)}\\
&\hbox{}&+\underbrace{\sum_{\substack{p \geq 0, q \geq 1\\ p+q=k+1\\ i=0,1,\ldots,r}} \nabla_{\phi}^i \overline{\nabla}_{\phi}^p {\rm Rm}_{\phi} \ast \nabla_{\phi}^{r-i} \overline{\nabla}_{\phi}^q {\rm Rm}_{\phi}}_\text{($\nabla^r\overline{\nabla}^s{\rm Rm}$;V)}.
\end{eqnarray*}
By \eqref{evc}, \eqref{soe}, \eqref{utc}, \eqref{sao} and the uniform bound of $|D^{k+1}U|_{\omega_{\phi}}^2$, we can estimate the first term as follows:
\begin{eqnarray*}
|\text{($\nabla^r\overline{\nabla}^s{\rm Rm}$;I)}|_{\omega_{\phi}} &=&\nabla_{\phi}^r \overline{\nabla}_{\phi}^s({\rm Rm}_{\phi} \ast {\rm Rm}_{\phi}+\overline{\nabla}_{\phi} \nabla_{\phi} \widetilde{\eta}+{\rm Rm}_{\phi}+\widetilde{\eta} \ast {\rm Rm}_{\phi}+\overline{\nabla}_{\phi} \nabla_{\phi}^2 X+\nabla_{\phi}X \ast {\rm Rm}_{\phi})\\
&\leq& C_{54}(1+|\nabla_{\phi}^r \overline{\nabla}_{\phi}^s({\rm Rm}_{\phi} \ast {\rm Rm}_{\phi})|_{\omega_{\phi}}+|\nabla_{\phi}^r \overline{\nabla}_{\phi}^{s+1} \nabla_{\phi} \widetilde{\eta}|_{\omega_{\phi}}+|\nabla_{\phi}^r \overline{\nabla}_{\phi}^{s+1} \nabla_{\phi}^2 X|_{\omega_{\phi}}),
\end{eqnarray*}
\[
|\nabla_{\phi}^r \overline{\nabla}_{\phi}^s({\rm Rm}_{\phi} \ast {\rm Rm}_{\phi})|_{\omega_{\phi}}+|\nabla_{\phi}^r \overline{\nabla}_{\phi}^{s+1} \nabla_{\phi} \widetilde{\eta}|_{\omega_{\phi}} \leq C_{55}(1+|\nabla_{\phi}^r \overline{\nabla}_{\phi}^s {\rm Rm}_{\phi}|_{\omega_{\phi}}),
\]
\begin{eqnarray*}
|\nabla_{\phi}^r \overline{\nabla}_{\phi}^{s+1} \nabla_{\phi}^2 X|_{\omega_{\phi}} &\leq& C_{56} (1+|\nabla_{\phi}^r \overline{\nabla}_{\phi}^s {\rm Rm}_{\phi}|_{\omega_{\phi}}+|\nabla_{\phi}^r \overline{\nabla}_{\phi}^s \nabla_{\phi} {\rm Rm}_{\phi}|_{\omega_{\phi}}) \\
&\leq& C_{57}(1+|\nabla_{\phi}^r \overline{\nabla}_{\phi}^s {\rm Rm}_{\phi}|_{\omega_{\phi}}+|\nabla_{\phi}^{r+1} \overline{\nabla}_{\phi}^s {\rm Rm}_{\phi}|_{\omega_{\phi}})\\
&\hbox{}& (\text{where we used the Ricci identity and $(H_k)$}).
\end{eqnarray*}
Other terms are easier and estimated as follows:
\[
|\text{($\nabla^r\overline{\nabla}^s{\rm Rm}$;II)}|_{\omega_{\phi}}+|\text{($\nabla^r\overline{\nabla}^s{\rm Rm}$;III)}|_{\omega_{\phi}} \leq C_{58},
\]
\[
|\text{($\nabla^r\overline{\nabla}^s{\rm Rm}$;IV)}|_{\omega_{\phi}}+|\text{($\nabla^r\overline{\nabla}^s{\rm Rm}$;V)}|_{\omega_{\phi}} \leq C_{59}(1+|\nabla_{\phi}^r \overline{\nabla}_{\phi}^s {\rm Rm}_{\phi}|_{\omega_{\phi}}).
\]
Hence we have
\begin{equation}
\left( \frac{d}{dt}-\Delta_{\omega_{\phi}} \right) |\nabla_{\phi}^r \overline{\nabla}_{\phi}^s {\rm Rm}_{\phi}|_{\omega_{\phi}}^2 \leq C_{60}|\nabla_{\phi}^r \overline{\nabla}_{\phi}^s {\rm Rm}_{\phi}|_{\omega_{\phi}}^2-\frac{1}{2}|\nabla_{\phi}^{r+1} \overline{\nabla}_{\phi}^s {\rm Rm}_{\phi}|_{\omega_{\phi}}^2-|\overline{\nabla}_{\phi}\nabla_{\phi}^r \overline{\nabla}_{\phi}^s {\rm Rm}_{\phi}|_{\omega_{\phi}}^2.
\end{equation}
We can estimate the evolution equation of $|\nabla_{\phi}^{r-1} \overline{\nabla}_{\phi}^s {\rm Rm}_{\phi}|_{\omega_{\phi}}^2$ in a similar way to get
\begin{equation}
\left( \frac{d}{dt}-\Delta_{\omega_{\phi}} \right) |\nabla_{\phi}^{r-1} \overline{\nabla}_{\phi}^s {\rm Rm}_{\phi}|_{\omega_{\phi}}^2 \leq C_{61}-\frac{1}{2}|\nabla_{\phi}^r \overline{\nabla}_{\phi}^s {\rm Rm}_{\phi}|_{\omega_{\phi}}^2-|\overline{\nabla}_{\phi}\nabla_{\phi}^{r-1} \overline{\nabla}_{\phi}^s {\rm Rm}_{\phi}|_{\omega_{\phi}}^2.
\end{equation}
We take a smooth cut-off function $\tau$ that is identically equal to $1$ on $\overline{B_{r''_{k+3}}(p)}$, vanishes on the outside of $B_{r'_{k+3}}(p)$ and satisfies
\[
|\partial \tau|_{\omega_0}, \;\; |\sqrt{-1}\partial \bar{\partial} \tau |_{\omega_0} \leq C_{62},
\]
where $r'_{k+3}>r''_{k+3}>r/2$. Applying the maximum principle to the function $\tau^2|\nabla_{\phi}^r \overline{\nabla}_{\phi}^s {\rm Rm}_{\phi}|_{\omega_{\phi}}^2+A_1|\nabla_{\phi}^{r-1} \overline{\nabla}_{\phi}^s {\rm Rm}_{\phi}|_{\omega_{\phi}}^2$ (for a suitable uniform constant $A_1$), we get
\[
|\nabla_{\phi}^r \overline{\nabla}_{\phi}^s {\rm Rm}_{\phi}|_{\omega_{\phi}}^2 \leq C_{63}
\]
on $\overline{B_{r''_{k+3}}(p)} \times [0,T]$.

\vspace{4mm}

\textbf{Case 2:} $s=0$.

Using the Ricci identity repeatedly, we have
\begin{eqnarray*}
\left( \frac{d}{dt}-\Delta_{\omega_{\phi}} \right) \nabla_{\phi}^{k+1}{\rm Rm}_{\phi}&=&\underbrace{\nabla_{\phi}^{k+1}\left( \frac{d}{dt}-\Delta_{\omega_{\phi}} \right) {\rm Rm}_{\phi}}_\text{($\nabla^{k+1}{\rm Rm}_{\phi}$;I)}+\underbrace{\sum_{\substack{p \geq 0, q \geq 1\\ p+q=k+1}} \nabla_{\phi}^p {\rm Rm}_{\phi} \ast \nabla_{\phi}^q ({\rm Rm}_{\phi}+\widetilde{\eta}+\nabla_{\phi}X)}_\text{($\nabla^{k+1}{\rm Rm}_{\phi}$;II)}\\
&+& \underbrace{\sum_{\substack{p \geq 0, q \geq 1\\ p+q=k+1}} \nabla_{\phi}^p{\rm Rm}_{\phi} \ast \nabla_{\phi}^q {\rm Rm}_{\phi}}_\text{($\nabla^{k+1}{\rm Rm}_{\phi}$;III)}.
\end{eqnarray*}
By \eqref{evc}, \eqref{soe}, \eqref{utc}, \eqref{sao} and the uniform bound of $|D^{k+1}U|_{\omega_{\phi}}^2$, we can estimate these terms as
\[
|\text{($\nabla^{k+1}{\rm Rm}_{\phi}$;I)}|_{\omega_{\phi}} \leq C_{64}(1+|\nabla_{\phi}^{k+1} {\rm Rm}_{\phi}|_{\omega_{\phi}}+|\nabla_{\phi}^{k+2} {\rm Rm}_{\phi}|_{\omega_{\phi}}),
\]
\[
|\text{($\nabla^{k+1}{\rm Rm}_{\phi}$;II)}|_{\omega_{\phi}}+|\text{($\nabla^{k+1}{\rm Rm}_{\phi}$;III)}|_{\omega_{\phi}} \leq C_{65}(1+|\nabla_{\phi}^{k+1} {\rm Rm}_{\phi}|_{\omega_{\phi}}).
\]
Thus we have
\begin{equation} \label{epc}
\left( \frac{d}{dt}-\Delta_{\omega_{\phi}} \right)|\nabla_{\phi}^{k+1}{\rm Rm}_{\phi}|_{\omega_{\phi}}^2 \leq C_{66}|\nabla_{\phi}^{k+1}{\rm Rm}_{\phi}|_{\omega_{\phi}}^2-\frac{1}{2}|\nabla_{\phi}^{k+2}{\rm Rm}_{\phi}|_{\omega_{\phi}}^2-|\overline{\nabla}_{\phi} \nabla_{\phi}^{k+1}{\rm Rm}_{\phi}|_{\omega_{\phi}}^2.
\end{equation}
Now we use the same cut-off function $\tau$ constructed in Case 1, and consider the function $\tau^2 |\nabla_{\phi}^{k+1}{\rm Rm}_{\phi}|_{\omega_{\phi}}^2+A_2 |\nabla_{\phi}^k{\rm Rm}_{\phi}|_{\omega_{\phi}}^2$ (for a suitable uniform constant $A_2$). Since the evolution equation of $|\nabla_{\phi}^k{\rm Rm}_{\phi}|_{\omega_{\phi}}^2$ has been already estimated in \eqref{emc}, the maximum principle implies that
\[
|\nabla_{\phi}^{k+1}{\rm Rm}_{\phi}|_{\omega_{\phi}}^2 \leq C_{67}
\]
on $\overline{B_{r''_{k+3}}(p)} \times [0,T]$. Combining with Case 1, we have
\[
|D^{k+1}{\rm Rm}_{\phi}|_{\omega_{\phi}}^2 \leq C_{68}
\]
on $\overline{B_{r''_{k+3}}(p)} \times [0,T]$, where the constant $C_{68}$ depends only on $N$, $\gamma$, $\omega_0$, $X$, $\|\phi(\cdot, 0)\|_{C^{k+5}(B_r (p))}$, $\|\phi\|_{C^0(B_r(p) \times [0,T])}$, $\|\widetilde{\eta}\|_{C^{k+3}(B_r(p))}$ and $\|F\|_{C^0(B_r(p))}$.

Applying $D^{k+1}$ to the equation \eqref{sMAc} and taking the trace, we have
\begin{eqnarray*}
|\Delta_{\omega_{\phi}} D^{k+1} \dot{\phi}|_{\omega_{\phi}} &\leq& |D^{k+1} \Delta_{\omega_{\phi}} \dot{\phi}|_{\omega_{\phi}}+C_{69} \sum_{i=0}^{k+1}|D^i {\rm Rm}_{\phi}|_{\omega_{\phi}} |D^{k+1-i} \dot{\phi}|_{\omega_{\phi}}\\
&\leq& C_{70} \left( |D^{k+1}{\rm Rm}_{\phi}|_{\omega_{\phi}}+|D^{k+1} \widetilde{\eta}|_{\omega_{\phi}}+|D^{k+2}X|_{\omega_{\phi}}+\sum_{i=0}^{k+1}|D^i {\rm Rm}_{\phi}|_{\omega_{\phi}} |D^{k+1-i} \dot{\phi}|_{\omega_{\phi}} \right).
\end{eqnarray*}
From the above estimates and $(H_k)$, we know that $|\Delta_{\omega_{\phi}} D^{k+1} \dot{\phi}|_{\omega_{\phi}}$ is uniformly bounded. Hence $D^{k+1} \dot{\phi}$ is $C^{1,\alpha}$, which implies $\dot{\phi}$ is $C^{k+2,\alpha}$. Differentiating the equation \eqref{MAf} $(k+2)$-times and applying the elliptic Schauder estimates, we find that $\phi$ is $C^{k+4, \alpha}$ on $\overline{B_{r_{k+4}}(p)} \times [0,T]$ where $r''_{k+3}>r_{k+4}>r/2$. Thus we have the statement $(H_{k+1})$ as desired. This completes the proof of Proposition \ref{toe}.
\end{proof}
Now we give the proof of Theorem \ref{tck}.
\begin{proof}[Proof of Theorem \ref{tck}]
Let $T>0$ be a constant. By Proposition \ref{ces}, we know that
\[
\sup_{M \times [0,T]}|\varphi_{\epsilon}|, \;\; \sup_{M \times [0,T]} |\dot{\varphi}_{\epsilon}|<C(T)
\]
for some constant $C(T)$ (independent of $\epsilon$). Thus Proposition \ref{ufl} implies that
\begin{equation} \label{ulr}
A(T)^{-1} \omega_{\epsilon} \leq \omega_{\varphi_{\epsilon}} \leq A(T) \omega_{\epsilon}
\end{equation}
on $M$ for some constant $A(T)$ (independent of $\epsilon$).
We exhaust $M\backslash D$ by a sequence of compact subsets $K$, and $[0,\infty)$ by a sequence of closed intervals $[0,T]$. From \eqref{ulr}, we know that
\[
N^{-1} \omega_0 \leq \omega_{\phi_{\epsilon}} \leq N \omega_0
\]
on $K \times [0,T]$, where the constant $N$ only depends on $K$ and $T$. Moreover, the initial data $k\chi+c_{\epsilon 0}$, $(1-\beta)\eta_{\epsilon}$, $F_{\epsilon}$ are uniformly bounded in the $C_{loc}^{\infty}$-topology on $K \times [0,T]$.
Thus Proposition \ref{toe}, together with the diagonal argument implies that there exists a subsequence $\varphi_{\epsilon_i}(t)$ which converges to a function $\varphi(t)$ that is smooth on $M \backslash D$. Then, by \eqref{ulr}, we also know that $\omega_{\varphi}$ is a conical K\"ahler metric along $(1-\beta)D$. Now we will check that $\omega_{\varphi}$ satisfies the equation \eqref{MCKRF}. Let $\zeta=\zeta(x,t)$ be any smooth $(n-1,n-1)$-form on $M \times [0,\infty)$ with compact support ${\rm Supp}(\zeta)$. Without loss of generality, we assume that ${\rm Supp}(\zeta) \subset [0,T)$. Since $F_{\epsilon}$, $\chi$, $\varphi_{\epsilon}$ are uniformly bounded on $M \times [0,T]$, for $t \in [0,T]$, dominated convergence theorem implies that
\begin{eqnarray*}
\int_M \frac{\partial \omega_{\varphi_{\epsilon}}}{\partial t} \wedge \zeta &=&
\int_M \sqrt{-1} \partial \bar{\partial} \left( \log \left( \frac{\omega_{\varphi_{\epsilon}}^n}{\omega_0^n} \cdot \prod_{i=1}^d(\epsilon^2+|s_i|_{H_i}^2)^{(1-\beta)\tau_i} \right)+F_0+\gamma(k\chi+\varphi_{\epsilon}) \right) \wedge \zeta\\
&\hbox{}&+\int_M L_X \omega_{\varphi_{\epsilon}} \wedge \zeta\\
&=& \int_M \left( \log \left( \frac{\omega_{\varphi_{\epsilon}}^n}{\omega_0^n} \cdot \prod_{i=1}^d(\epsilon^2+|s_i|_{H_i}^2)^{(1-\beta)\tau_i} \right)+F_0+\gamma(k\chi+\varphi_{\epsilon}) \right) \wedge \sqrt{-1} \partial \bar{\partial} \zeta\\
&\hbox{}&-\int_M \omega_{\varphi_{\epsilon}} \wedge L_X \zeta\\
&\xrightarrow{\epsilon_i \to 0}& \int_M \left( \log \frac{\omega_{\varphi}^n}{\omega_0^n}+F_0+\gamma(k\chi+\varphi) + \log |s_D|_{H_D}^{2(1-\beta)}\right) \wedge \sqrt{-1} \partial \bar{\partial} \zeta\\
&\hbox{}& -\int_M \omega_{\varphi} \wedge L_X \zeta\\
&=& \int_M \sqrt{-1} \partial \bar{\partial} \left( \log \frac{\omega_{\varphi}^n}{\omega_0^n}+F_0+\gamma(k\chi+\varphi) + \log |s_D|_{H_D}^{2(1-\beta)}\right) \wedge \zeta\\
&\hbox{}& +\int_M L_X \omega_{\varphi} \wedge \zeta\\
&=& \int_M (-{\rm Ric}(\omega_{\varphi})+\gamma \omega_{\varphi}+(1-\beta)[D]+L_X \omega_{\varphi}) \wedge \zeta, \\
\end{eqnarray*}
\[
\int_M \omega_{\varphi_{\epsilon_i}} \wedge \frac{\partial \zeta}{\partial t} \xrightarrow{\epsilon_i \to 0} \int_M \omega_{\varphi} \wedge \frac{\partial \zeta}{\partial t}.
\]
On the other hand, as in the proof of \cite[Theorem 4.1]{LZ17}, we have
\[
\int_M \frac{\partial \omega_{\varphi_{\epsilon}}}{\partial t} \wedge \zeta \xrightarrow{\epsilon_i \to 0} \int_M \frac{\partial \omega_{\varphi}}{\partial t} \wedge \zeta.
\]
Hence, on $[0,T]$, we find that
\begin{eqnarray*}
\frac{\partial}{\partial t} \int_M \omega_{\varphi} \wedge \zeta&=&\int_M(-{\rm Ric}(\omega_{\varphi})+\gamma \omega_{\varphi}+(1-\beta)[D]+L_X \omega_{\varphi}) \wedge \zeta \\
&\hbox{}& +\int_M \omega_{\varphi} \wedge \frac{\partial \zeta}{\partial t}.
\end{eqnarray*}
Integrating the above equation on $[0,\infty)$, we get
\begin{eqnarray*}
\int_{M \times [0,\infty)} \frac{\partial \omega_{\varphi}}{\partial t} \wedge \zeta dt &=& \int_0^{\infty} \left( \frac{\partial}{\partial t} \int_M \omega_{\varphi} \wedge \zeta - \int_M \omega_{\varphi} \wedge \frac{\partial \zeta}{\partial t} \right) dt \\
&=& \int_{M \times [0,\infty)} (-{\rm Ric}(\omega_{\varphi})+\gamma \omega_{\varphi}+(1-\beta)[D]+L_X \omega_{\varphi}) \wedge \zeta dt.
\end{eqnarray*}

Since $\zeta$ is arbitrary, $\omega_{\varphi}$ satisfies the equation \eqref{MCKRF} in the sense of distributions on $M \times [0,\infty)$. Meanwhile, the equation \eqref{MAF} can be written as
\[
\frac{(\omega_0+\sqrt{-1}\partial \bar{\partial} \phi_{\epsilon})^n}{\omega_0^n}=\frac{\exp(\dot{\phi_{\epsilon}}-F_0-\gamma \phi_{\epsilon}-\theta_X-X(\phi_{\epsilon}))}{\prod_{i=1}^d(\epsilon^2+|s_i|_{H_i}^2)^{(1-\beta)\tau_i}},
\]
where $\phi_{\epsilon}$, $\dot{\phi}_{\epsilon}$ and $X(\phi_{\epsilon})$ are uniformly bounded, which implies that the $L^p$-norm of the RHS is uniformly bounded for some $p>1$ since $\beta \in (0,1]$. Thus the H\"older continuity of $\varphi$ with respect to $\omega_0$ is a direct consequence from Kolodziej's work \cite[Theorem 2.1]{Kol08}. This completes the proof of Theorem \ref{tck}.
\end{proof}

\end{document}